\def\NZQ{\Bbb}               
\def\NN{{\NZQ N}}
\def\ZZ{{\NZQ Z}}
\def\frk{\frak}               
\def\pp{{\frk p}}
\def\qq{{\frk q}}
\def\mm{{\frk m}}
\def\Phi{{\frk n}}
\def\Phi{{\frk N}}
\def\opn#1#2{\def#1{\operatorname{#2}}} 
\opn\chara{char} \opn\length{\ell} \opn\pd{pd} \opn\rk{\lk}\opn\link{link}
\opn\projdim{proj\,dim} \opn\injdim{inj\,dim} \opn\rank{rank}
\opn\depth{depth} \opn\and{and} \opn\grade{grade}
\opn\height{height} \opn\embdim{emb\,dim} \opn\codim{codal}
\opn\Tr{Tr} \opn\bigrank{big\,rank}
\opn\superheight{superheight}\opn\lcm{lcm}
\opn\trdeg{tr\,deg}%
\opn\reg{reg} \opn\lreg{lreg} \opn\ini{in}
\opn\div{div} \opn\Div{Div} \opn\cl{cl} \opn\Cl{Cl}
\opn\Spec{Spec} \opn\Supp{Supp} \opn\supp{supp} \opn\Sing{Sing}
\opn\Ass{Ass} \opn\Min{Min}
\opn\Ann{Ann} \opn\Rad{Rad} \opn\Soc{Soc}
\opn\Im{Im}
 \opn\Ker{Ker} \opn\Coker{Coker} \opn\Am{Am} \opn \inf{inf}
\opn\Hom{Hom} \opn\Tor{Tor} \opn\Ext{Ext} \opn\End{End} \opn\cd{cd}
\opn\Aut{Aut} \opn\id{id}
\opn\nat{nat}
\opn\pff{pf}
\opn\Pf{Pf} \opn\GL{GL} \opn\SL{SL} \opn\mod{mod} \opn\ord{ord}
\opn\cl{cl} \opn\conv{conv} \opn\ext{ext} \opn\rad{rad} \opn\star{star}
\opn\red{red}
\opn\aff{aff} \opn\con{conv} \opn\relint{relint} \opn\st{st}
\opn\lk{lk} \opn\cn{cn} \opn\core{core} \opn\vol{vol}
\opn\link{link} \opn\star{star}
\opn\gr{gr}
\def\pot#1#2{#1[\kern-0.28ex[#2]\kern-0.28ex]}
\opn\dirlim{\underrightarrow{\lim}}
\opn\inivlim{\underleftarrow{\lim}}
\let\tensor=\otimes
\let\iso=\cong
\let\Dirsum=\bigoplus
\def\Implies{\ifmmode\Longrightarrow \else
     \unskip${}\Longrightarrow{}$\ignorespaces\fi}
\def\implies{\ifmmode\Rightarrow \else
     \unskip${}\Rightarrow{}$\ignorespaces\fi}
\def\iff{\ifmmode\Longleftrightarrow \else
     \unskip${}\Longleftrightarrow{}$\ignorespaces\fi}
\newtheorem{Theorem}{Theorem}[section]
\newtheorem{Corollary}[Theorem]{Corollary}
\newtheorem{Proposition}[Theorem]{Proposition}
\newtheorem{Remark}[Theorem]{Remark}
\newtheorem{Example}[Theorem]{Example}
\newtheorem{Definition}[Theorem]{Definition}
\newtheorem{Question}[Theorem]{Question}
\let\epsilon\varepsilon
\let\phi=\varphi
\let\kappa=\varkappa
\def\qed{\ifhmode\textqed\fi
   \ifmmode\ifinner\quad\qedsymbol\else\dispqed\fi\fi}
\def\textqed{\unskip\nobreak\penalty50
    \hskip2em\hbox{}\nobreak\hfil\qedsymbol
    \parfillskip=0pt \finalhyphendemerits=0}
\def\dispqed{\rlap{\qquad\qedsymbol}}
\opn\dis{dis}
\def\pnt{{\raise0.5mm\hbox{\large\bf.}}}
\begin{document}
\title{Relative Cohen--Macaulayness and relative unmixedness of bigraded modules}

\author{Maryam Jahangiri,  Ahad Rahimi}

\subjclass[2000]{ 13D45, 16W50, 13C14, 13F55.  The first author was in part supported by a grant from IPM
(No. 89130115)
The second author was in part supported by a grant from IPM (No. 89130050)}

\address{Maryam Jahangiri, School of Mathematics and Computer Science, Damghan University of Basic Sciences, damghan, Iran.
 School of Mathematics, Institute for Research in Fundamental Sciences (IPM), P.O. Box: 19395-5746, Tehran, Iran.
}\email{jahangiri@dubs.ac.ir}
 \address{ Ahad Rahimi, Department of Mathematics, Razi University, Kermanshah, Iran.
 School of Mathematics, Institute for Research in Fundamental Sciences
(IPM), P. O. Box: 19395-5746, Tehran, Iran.
}\email{ahad.rahimi@razi.ac.ir}

\maketitle
\address{}

 \maketitle
\begin{abstract}
In this paper we study the finitely generated bigraded modules over a standard bigraded polynomial ring which are relative Cohen--Macaulay or relative unmixed with respect to one of the irrelevant bigraded ideals.  A generalization of Reisner's criterion for Cohen--Macaulay simplicial complexes is considered.
\end{abstract}
\maketitle

\section*{Introduction}
Let $S=K[x_1, \dots, x_m, y_1, \dots, y_n]$ be the standard bigraded polynomial ring over a field $K$
 and bigraded irrelevant ideals $P=(x_1, \dots, x_m)$ and $Q=(y_1, \dots,  y_n)$. Let $M$ be a finitely generated bigraded $S$-module. In \cite{AR2} we call $M$ to be relative Cohen--Macaulay with respect to $Q$ if we have only one nonvanishing local cohomology with respect to $Q$. In other words, $\grade(Q, M)=\cd(Q, M)$ where $\cd(Q, M)$ denote the cohomological dimension of $M$ with respect to $Q$. Our aim in this paper is to investigate more about relative Cohen--Macaulay modules and its related topics like relative unmixedness.  We organize this paper as follows: In Section 1, we first ask the following question:

 Let $M$ be relative Cohen--Macaulay with respect to $P$ and $Q$. Is $M$ itself Cohen--Macaulay? We have a counterexample which shows that the question is not true for dimension 2. Even though, for two given ideals $I$ and $J$ of a local ring $R$ and a finitely generated $R$-module of $M$ which is relative Cohen--Macaulay with respect to $I$ and $J$, the question does not hold. We give some especial cases in which the question holds.

  We call $M$ to be relative unmixed with respect to $Q$ if $\cd(Q, M)=\cd(Q, S/\pp)$ for all $\pp \in \Ass M$. We show that relative Cohen--Macaulay modules with respect to $Q$ are relative unmixed with respect to $Q$. The converse does not hold in general. In the case in which every quotient of $M$ is relative unmixed with respect to $Q$ then it holds. Next we change the above question in the following sense:

  Let  $M$ is relative Cohen--Macaulay with respect to $P$ and relative unmixed with respect to $Q$. Is $M$ itself unmixed? The local version of this question in not the case for dimension 2. We prove that  the question has positive answer in the following bigraded cases: $M$ be a bigraded $S$-module for which   i)   $\cd(P,M)\leq 1$ and $\cd(Q, M)\geq 0$,   ii) $M=M_1\tensor_K M_2$ where $M_1$ is a graded $K[x]$-module and $M_2$ is a graded $K[y]$-module and where $S/(\pp_1+\pp_2)S $ is an integral domain for all $\pp_1\in \Ass M_1$ and $\pp_2 \in \Ass M_2$,  iii) every cyclic submodule of $M$ is pure,  iv)  $M=S/I$ where $I$ is a monomial ideal. We believe that the question has  negative answer for dimension 4. Until now we are not succeed to find such a counterexample. We have this question at the end of this section.

  In Section 2,  we describe explicitly the krull-dimension of the graded components of local cohomology of relative Cohen--Macaulay modules. We show that if $M$ is relative Cohen--Macaulay with respect to $Q$ with $\cd(Q, M)=q$, then  $\dim_S H^q_{Q}(M)=p$ where $p=\cd(P, M)$. More general is true for its  graded components, namely if $f_Q(M)=\cd(Q, M)=q$ and $p+q=\dim M$, then  $\dim_{K[x]} H^q_{Q}(M)_j=p$ for $j\ll 0$ where $f_Q(M)$ is the finiteness dimension of $M$ relative to $Q$.  As a consequence, if $M$ is relative Cohen--Macaulay with respect to $Q$, then $H^q_{Q}(M)$ is an Artinian $S$-module if and only if $q =\dim(M)$. In other words, $H^q_{Q}(M)$ is not Artinian unless the ordinary known case.

  In the following section we consider the hypersurface ring $R=S/fS$ where $f$ is a bihomogeneous element of $S$. We show that the local cohomologies $H^i_Q(R)$ for $i=n, n-1$ where $n\geq 2$ are  never finitely generated. Moreover, $H_{Q}^{n}(R)$ is an Artinian $S$-module  for  $m\leq 1$, and $H_{Q}^{n-1}(R)$ is an Artinian $S$-module if and only if $m=0$.

  In the final section, we let  $\Delta$ be a simplicial complex on $[n+m]$ and  $K[\Delta]=S/I_{\Delta}$ its Stanley-Reisner ring. We say that $\Delta$ is relative Cohen--Macaulay with respect to $Q$ over $K$ if $K[\Delta]$ is relative Cohen--Macaulay with respect to $Q$. We show that  $\cd(Q, K[\Delta])=\dim \Delta_W+1$ where $\Delta_W$ is the subcomplex of $\Delta$ whose faces are subsets of $W$. This generalizes the known fact that for every simplicial complex $\Delta$ one has  $\dim K[\Delta]=\dim \Delta +1$. Using this fact and the generalization Hochster's formula \cite{AR2} we prove the following: $\Delta$ is relative Cohen--Macaulay with respect to $Q$ with $\cd(Q,K[\Delta])=q$  if and only if  $\widetilde{H}_{i}((\link F \cup G)_W;K)=0$ for all $F\in \Delta_W$, $G\subset V$ and all $i< \dim \link_{\Delta_W} F$. This in particular implies the Reisner's criterion for Cohen--Macaulay simplicial complexes. A general version of this statement for monomial case is obtained.


\address{}

\address{}

\section{Cohen--macaulayness and unmixedness with respect to $P$,  $Q$ and $P+Q$}

In \cite{AR2} we call $M$ to be relative Cohen--Macaulay with respect to $Q$ if $H^i_Q(M)=0$ for all $i \neq q$ with $q\geq 0$. In other words, $\grade(Q, M)=\cd(Q, M)$ where $\cd(Q, M)$ denote the cohomological dimension of $M$ with respect to $Q$.
We recall the following facts from \cite{AR2} which will be used in the sequel.
\begin{eqnarray}
\label{3}
\cd(P,M)=\dim M/QM  \quad \text {and} \quad \cd(Q,M)=\dim M/PM.
\end{eqnarray}
It is natural to ask the following question:
\begin{Question}
Let $(R, \mm)$ be a Noetherian local ring, $I$ and $J$  two ideals of $R$ such that $I+J=\mm$ and $M$ a finitely generated $R$-module. If $M$ is relative Cohen--Macaulay with respect to $I$, i.e., $\grade(I,M)=\cd(I,M)$ and relative Cohen--Macaulay with respect to $J$, i.e., $\grade(J,M)=\cd(J,M)$.  Is $M$ itself Cohen--Macaulay?
\end{Question}
In the following, we give several examples which shows that the question is not the case in general for graded, local and bigraded cases.

\begin{Example}{\em
Consider the standard graded polynomial ring $S=K[x_1,\dots, x_{2n}]$ with $n\geq 1$ and $\deg x_i=1$ for all $i$.
Set $P=(x_1, \dots, x_n)$,  $Q=(x_{n+1}, \dots, x_{2n})$ and $\mm=(x_1, \dots, x_{2n})$ the unique graded maximal ideal of $S$. Set $R=S\oplus S/\pp$ where $\pp=(x_1+x_{n+1},x_2+x_{n+2},  \dots, x_n+x_{2n})$. One has that $S/\pp$ is Cohen--Macaulay
$S$-module of dimension $n$,  $\depth R=n$ and $\dim R= 2n$. On the other hand, $\grade(P,R)=\cd(P,R)=\grade(Q,R)=\cd(Q,R)=n$.
Thus $R$ is relative Cohen-Macaulay with respect to $P$ and $Q$, but $R$ itself is not Cohen--Macaulay.
Localizing $R$ at the maximal ideal $\mm$ and note that for any graded ideal $I$
of $S$  we have $\grade(I, R)=\grade(I_{\mm}, R_{\mm})$,
$\cd(I, R)=\cd(I_{\mm}, R_{\mm})$, $\depth_S R=\depth_{S_{\mm}} R_{\mm}$
and $\dim_S R=\dim_{S_{\mm}} R_{\mm}$. Now one easily deduces that the question is not the case in the local case  too. }
\end{Example}
\begin{Example}{\em
Let $n\geq 2$, and let $S=K[x_1, \dots, x_n,y_1, \dots, y_n]$ be the standard bigraded polynomial ring
 with $\deg x_i=(1,0)$ and $\deg y_i=(0,1)$ for $i= 1, \dots, n$. Set  $I=\bigcap_{i=1}^n \pp_i$ where $\pp_i=(x_i,y_i)$ for $i= 1, \dots, n$ and $R=S/I$. Let $\mm$ be the unique graded maximal ideal of $S$. From the exact sequence $0\rightarrow S/I \rightarrow \bigoplus_{i=1}^n S/\pp_i \rightarrow S/{\mm}\rightarrow 0$ we have the exact sequence
\[
\rightarrow H^j_Q(S/I) \rightarrow \bigoplus_{i=1}^n H^j_Q(S/\pp_i)\rightarrow H^j_Q(S/{\mm}) \rightarrow H^{j+1}_Q(S/I) \rightarrow.
\]
Note that $H^0_Q(S/{\mm})=S/{\mm}$ and $H^j_Q(S/\pp_i)=0$ for $j \neq n-1$ and all $i$. It follows that  $\grade(Q, R)=1$ and $\cd(Q,R)=n-1$. By a similar argument, applying the functor $H^i_P(-)$ to the above short exact sequence one obtains $\grade(P,R)=1$ and $\cd(P,R)=n-1$. Therefore $R$ is relative Cohen--Macaulay with respect to $P$ or $Q$ if and only if $n=2$. On the other hand, one has $\depth R=n-1$ and $\dim R=2(n-1)$. Thus if $n=2$, then $R$ is relative Cohen--Macaulay with respect to $P$ and $Q$, but not Cohen--Macaulay.}
\end{Example}
In the following we give two special cases in which the question holds. We recall the following theorem from \cite{AR2}
\begin{Theorem}
\label{main1}
Let $M$ be a finitely generated bigraded $S$-module which is relative Cohen--Macaulay with respect
to $Q$ and $|K|=\infty$. Then we have $\cd(Q ,M) +\cd(P,M) =\dim M$.
\end{Theorem}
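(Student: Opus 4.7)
The plan is to establish the equality $\cd(Q,M)+\cd(P,M)=\dim M$ by proving the two inequalities separately, using the relative Cohen--Macaulay hypothesis together with the formulas in (\ref{3}).

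For the inequality $\cd(P,M)+\cd(Q,M)\le\dim M$, set $q=\grade(Q,M)=\cd(Q,M)$. Since $|K|=\infty$, a standard graded prime-avoidance argument produces an $M$-regular sequence $\ell_1,\dots,\ell_q$ inside $Q$ whose entries are $K$-linear combinations of $y_1,\dots,y_n$, hence bihomogeneous of bidegree $(0,1)$. Killing a regular sequence of length $q$ drops Krull dimension by exactly $q$, so $\dim M/(\ell_1,\dots,\ell_q)M=\dim M-q$. Since $(\ell_1,\dots,\ell_q)\subseteq Q$, the module $M/QM$ is a quotient of $M/(\ell_1,\dots,\ell_q)M$, whence $\dim M/QM\le\dim M-q$. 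Combining with (\ref{3}), we conclude $\cd(P,M)\le\dim M-\cd(Q,M)$.

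For the reverse inequality $\cd(P,M)+\cd(Q,M)\ge\dim M$, write $\mm=P+Q$, so that $\cd(\mm,M)=\dim M$ since $\mm$ is the bigraded maximal ideal and $M$ is finitely generated. Consider the Grothendieck spectral sequence
\[
E_2^{i,j}=H^i_P\bigl(H^j_Q(M)\bigr)\Longrightarrow H^{i+j}_{\mm}(M).
\]
The relative Cohen--Macaulay hypothesis forces $H^j_Q(M)=0$ for $j\neq q$, so only the row $j=q$ of $E_2$ survives and the spectral sequence degenerates to isomorphisms $H^i_P(H^q_Q(M))\cong H^{i+q}_{\mm}(M)$. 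Taking $i$ maximal with $H^i_P(H^q_Q(M))\neq 0$ yields $\dim M=q+\cd(P,H^q_Q(M))$. It then suffices to show $\cd(P,H^q_Q(M))\le\cd(P,M)$; this is obtained by computing $H^q_Q(M)$ as a subquotient arising from the \v{C}ech complex $\check{C}^{\bullet}(y_1,\dots,y_n;M)$ whose entries are localizations $M_{y_{i_1}\cdots y_{i_k}}$, together with the fact that local cohomology commutes with localization so that $\cd(P,M_{y})\le\cd(P,M)$ for any such localization, followed by iterated short-exact-sequence bounds on cocycles and coboundaries.

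The principal technical obstacle is exactly the comparison $\cd(P,H^q_Q(M))\le\cd(P,M)$: the module $H^q_Q(M)$ is generally not finitely generated, so direct dimension-theoretic arguments fail, and the argument must be made via the \v{C}ech description above. Equivalently, one may invoke the known inequality $\cd(I+J,M)\le\cd(I,M)+\cd(J,M)$ specialized to $I=P$ and $J=Q$, which furnishes $\dim M=\cd(\mm,M)\le\cd(P,M)+\cd(Q,M)$ directly and bypasses the cohomological bookkeeping.
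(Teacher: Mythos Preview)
The paper does not actually prove Theorem~\ref{main1}: it is quoted verbatim from \cite{AR2} with the words ``We recall the following theorem,'' and no argument is supplied. So there is no in-paper proof to compare your attempt against.

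On its own merits your argument is correct. The inequality $\cd(P,M)+\cd(Q,M)\le\dim M$ via an $M$-regular sequence of linear forms in $Q$ is clean and uses both hypotheses exactly where needed: $|K|=\infty$ for graded prime avoidance among the $(0,1)$-forms, and relative Cohen--Macaulayness to identify $\grade(Q,M)$ with $\cd(Q,M)$. For the reverse inequality your spectral-sequence degeneration $H^i_P\bigl(H^q_Q(M)\bigr)\cong H^{i+q}_{\mm}(M)$ is the same one the paper exploits later (e.g.\ in Proposition~\ref{dim}), and Grothendieck vanishing/nonvanishing at $\dim M$ gives $\cd\bigl(P,H^q_Q(M)\bigr)=\dim M-q$. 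The only step requiring care is the bound $\cd\bigl(P,H^q_Q(M)\bigr)\le\cd(P,M)$, since $H^q_Q(M)$ is not finitely generated. Your \v{C}ech-complex bookkeeping can be made to work, but the slickest justification is the support argument that really underlies it: $\Ann_S M$ kills $H^q_Q(M)$, so $\Supp H^q_Q(M)\subseteq\Supp M$; every finitely generated submodule $N$ then satisfies $\cd(P,N)\le\cd(P,M)$ by the $\Supp$-characterisation of $\cd$ recorded just before Proposition~\ref{ass}, and one passes to the direct limit using that local cohomology commutes with filtered colimits. Your alternative appeal to the sub-additivity $\cd(P+Q,M)\le\cd(P,M)+\cd(Q,M)$ is equally valid and rests on the same support fact applied inside the spectral sequence.
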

\begin{Proposition}
\label{ab}
Let $M$ be a finitely generated bigraded $S$-module with $\cd(P,M)=p$ and  $\cd(Q,M)=q$ and let $|K|=\infty$. The following statements hold:
\begin{itemize}
\item[{(a)}] if  $M$ is  relative Cohen--Macaulay with respect to $P$ and $Q$ with $p=0$ or $p=\dim M$ and $q\geq 0$. Then $M$ is Cohen--Macaulay.
\item[{(b)}] if $M=M_1\tensor_K M_2$ where $M_1$ is  finitely generated graded $K[x]$-module and $M_2$ is  finitely generated graded $K[y]$-module. If $M$ is relative Cohen--Macaulay with respect to $P$ and $Q$, then $M$ is Cohen--Macaulay.
\end{itemize}
\end{Proposition}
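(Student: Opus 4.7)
The plan is to apply the Grothendieck spectral sequence associated with the composition of the local cohomology functors. Since $\Gamma_P\circ\Gamma_Q=\Gamma_{P+Q}=\Gamma_\mm$ and $\Gamma_Q$ sends injectives to $\Gamma_P$-acyclic modules, there is a convergent spectral sequence
\[
E_2^{i,j}=H^i_P\bigl(H^j_Q(M)\bigr)\Longrightarrow H^{i+j}_\mm(M).
\]
The hypothesis that $M$ is relative Cohen--Macaulay with respect to $Q$ annihilates every row but $j=q$, so the sequence degenerates to the isomorphisms $H^{i+q}_\mm(M)\cong H^i_P(H^q_Q(M))$. Theorem \ref{main1} gives $\dim M=p+q$, so $M$ is Cohen--Macaulay precisely when $H^i_P(H^q_Q(M))=0$ for every $i<p$. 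When $p=0$ this is immediate. When $p=\dim M$ one has $q=0$, and the task reduces to showing that $M=\Gamma_Q(M)$: once $M$ is $Q$-torsion, $H^q_Q(M)=M$ and the required vanishing $H^i_P(M)=0$ for $i<p$ is exactly the relative Cohen--Macaulay hypothesis with respect to $P$.

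\noindent\textbf{Plan for (b).} Since every $x_j$ annihilates the $K[\yb]$-module $M_2$, the \v{C}ech complex on $\xb=x_1,\dots,x_m$ factors over $K$ as $C^\bullet(\xb;M_1\otimes_K M_2)=C^\bullet(\xb;M_1)\otimes_K M_2$. Flatness of $M_2$ over the field $K$ lets cohomology commute with the tensor product, giving $H^i_P(M)=H^i_P(M_1)\otimes_K M_2$. The relative Cohen--Macaulay hypothesis with respect to $P$ therefore forces $H^i_{(\xb)}(M_1)=0$ for $i\neq p$, so $M_1$ is Cohen--Macaulay over $K[\xb]$; the symmetric computation shows that $M_2$ is Cohen--Macaulay over $K[\yb]$. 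The conclusion then follows from the classical additivity identities
\[
\depth_S(M_1\otimes_K M_2)=\depth_{K[\xb]}M_1+\depth_{K[\yb]}M_2,\qquad \dim_S(M_1\otimes_K M_2)=\dim M_1+\dim M_2,
\]
so a tensor product over $K$ of Cohen--Macaulay modules over $K[\xb]$ and $K[\yb]$ is automatically Cohen--Macaulay over $S=K[\xb]\otimes_K K[\yb]$.

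\noindent\textbf{Main obstacle.} The only step that is not purely formal is verifying $M=\Gamma_Q(M)$ in the $p=\dim M$ case of~(a). I plan to handle it by applying $\Gamma_Q$ to the canonical short exact sequence $0\to\Gamma_Q(M)\to M\to M/\Gamma_Q(M)\to 0$; idempotence of $\Gamma_Q$ together with the long exact sequence and the vanishing of $H^i_Q(M)$ for $i\geq 1$ forces $H^i_Q(M/\Gamma_Q(M))=0$ for every $i\geq 0$. If $M/\Gamma_Q(M)$ were nonzero, then $Q$ would avoid each of its associated primes, producing a $Q$-regular element and hence $\grade(Q,M/\Gamma_Q(M))\geq 1$, which contradicts the total vanishing. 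Therefore $M/\Gamma_Q(M)=0$ and the spectral sequence argument of (a) closes.
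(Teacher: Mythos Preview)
Your proof is correct and follows essentially the same route as the paper: for part~(a) you use the degenerate Grothendieck spectral sequence $H^i_P(H^j_Q(M))\Rightarrow H^{i+j}_{\mm}(M)$ (the paper uses the symmetric version $H^i_Q(H^j_P(M))$, but the argument is identical up to swapping the roles of $P$ and $Q$), and for part~(b) you use the K\"unneth-type isomorphism $H^i_P(M_1\otimes_K M_2)\cong H^i_P(M_1)\otimes_K M_2$ together with the additivity of depth and dimension from \cite{STY}, exactly as the paper does. The paper dispatches the $q=0$ torsion step in~(a) by directly invoking the equivalence ``$\cd(Q,M)=0 \Leftrightarrow \Gamma_Q(M)=M$'', which is precisely what your long-exact-sequence argument establishes; note, however, that the contradiction at the end is not that $\grade(Q,N)\geq 1$ per se, but that $\grade(Q,N)$ is \emph{finite} (since $Q+\Ann N\subseteq\mm$) and hence $H^{\grade(Q,N)}_Q(N)\neq 0$.
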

\begin{proof}
In order to proof (a) we consider the spectral sequence $H^i_Q\big (H^j_{P}(M)\big )\underset{i} \Longrightarrow H^{i+j}_{\mm}(M)$ where $\mm=P+Q $. As $H^j_{P}(M)=0$ for all $j\neq 0$, then the above spectral sequence degenerates and one obtains  for all $i $ the following isomorphism of bigraded $S$-modules, $H^i_{Q}\big (H^0_{P}(M)\big) \iso H^{i}_{\mm}(M)$.
Using the fact that $\cd(P,M)=0$ if and only if $H^0_{Q}(M)=M$,  we therefore have  $H^i_{Q}(M) \iso H^{i}_{\mm}(M)$.
Since $H^{i}_{Q}(M) = 0$  for all  $i \neq  q $,   it follows that $H^{i}_{\mm}(M)= 0$ for all $i\neq q$ and so $M$ is Cohen--Macaulay.  Now let $p=\dim M$. By Theorem \ref{main1}, we have $q=0$ and then by a similar proof as above $M$ is Cohen--Macaulay.

In order to proof (b) we note that $H^i_P(M)\iso H^i_P(M_1)\tensor_KM_2$ for all $i$, see the proof \cite [Proposition 1.5]{AR2}. Since $M$ is relative Cohen--Macaulay with respect to $P$, it follows that $M_1$ is Cohen--Macaulay of dimension $p$. In fact, since $H^p_P(M)\neq 0$, it follows that $H^p_P(M_1)\neq 0$ and so  $p\leq \dim_{K[x]} M_1$. If  $p < \dim_{K[x]} M_1$, then $0=H^{\dim M_1}_P(M)\iso H^{\dim M_1}_P(M_1)\tensor_KM_2$. As  $M_2$ is  finitely generated faithful $K$-module, Grusen's theorem implies that $H^{\dim M_1}_P(M_1)=0$, a contradiction. Therefore $\dim_{K[x]} M_1=p$. By a similar argument as above  we have $\depth_{K[x]} M_1=p$. Similarly, from the isomorphism $H^i_Q(M)\iso M_1\tensor_K H^i_Q(M_2)$ for all $i$ we have  $\depth_{K[y]} M_2=\dim_{K[y]} M_2=q$. By \cite[Corollary 2.3]{STY} we have $\depth M=\dim M=p+q$, as desired.
\end{proof}
\begin{Remark}{\em
   By Proposition \ref{ab}(a), we deduce that Question 1.1 has positive answer while Example 1.3 shows that the question has negative answer when $\dim M=2.$ }
\end{Remark}
We recall the following known facts which will be used in the rest of paper:

Let $ 0 \rightarrow M' \rightarrow M \rightarrow M'' \rightarrow 0$  be an exact sequence of $S$-modules with $M$ finitely generated, then
\begin{eqnarray}
\label{1}
\cd(Q,M)=\max\{\cd(Q, M'), \cd(Q,M'')\},
\end{eqnarray}
Let $\Min M$ denote the minimal elements of $\Supp M$, then
\begin{eqnarray}
\label{2}
  \cd(Q,M)  =  \max \{\cd (Q, S/{\pp}): \pp \in \Ass(M)\}.
 \end{eqnarray}
Note also that
\begin{eqnarray*}
\cd(Q;M) &= & \max \{ \cd(Q,  S/\pp) : \pp \in \Supp(M)\}\\
        &= & \max \{ \cd(Q,  S/\pp) : \pp \in \Min(M)\}.
\end{eqnarray*}
\begin{Proposition}
\label{ass}
Let $M$ be a finitely generated bigraded $S$-module with $|K|=\infty$, then we have
\[
\grade(Q,M)\leq \cd(Q, S/{\pp}) \quad \text {for all} \quad \pp \in \Ass(M).
\]
\end{Proposition}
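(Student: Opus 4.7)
The plan is to prove the inequality by induction on $g := \grade(Q,M)$, using the bigraded linear structure of $Q$ together with $|K|=\infty$.

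Two boundary cases are handled first. If $g=0$, the conclusion is automatic because $\cd(Q,S/\pp)\ge 0$. If $\cd(Q,S/\pp)=0$, then by (\ref{3}) one has $\dim S/(\pp+P)=0$, which forces $Q\subseteq\pp$; since $\pp\in\Ass M$ embeds $S/\pp$ as a nonzero submodule of $M$, a power of $Q$ annihilates a nonzero element of $M$, giving $g=0$.

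For the inductive step I would assume $g\ge 1$ and $\cd(Q,S/\pp)\ge 1$, and produce a bihomogeneous linear form $\ell$ of bidegree $(0,1)$ satisfying two conditions: \emph{(i)} $\ell\notin\bigcup_{\qq\in\Ass M}\qq$, so that $\ell$ is $M$-regular, and \emph{(ii)} $\ell$ lies outside every minimal prime of $\pp+P$ of maximal dimension. Since $|K|=\infty$, prime avoidance inside the $K$-vector space $\langle y_1,\dots,y_n\rangle$ produces such an $\ell$: each relevant prime meets $\langle y_1,\dots,y_n\rangle$ in a proper $K$-subspace, where in \emph{(ii)} the assumption $\cd(Q,S/\pp)\ge 1$ rules out the degenerate situation that a top-dimensional minimal prime of $\pp+P$ equals the graded maximal ideal and hence contains all of $\langle y_1,\dots,y_n\rangle$.

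Next I would exhibit a prime $\pp'\in\Ass(M/\ell M)$ containing $\pp+\ell S$. Consider $\Soc_\pp(M)=\{u\in M:\pp u=0\}$, a nonzero finitely generated graded $S$-module because $\pp\in\Ass M$. The $M$-regularity of $\ell$ gives $\Soc_\pp(M)\cap\ell M=\ell\Soc_\pp(M)$, and graded Nakayama (with $\ell$ in the graded maximal ideal) yields $\Soc_\pp(M)/\ell\Soc_\pp(M)\ne 0$. Any representative $m$ of a nonzero class there produces a nonzero image $\bar m\in M/\ell M$ with $\pp+\ell S\subseteq\Ann(\bar m)$, so an associated prime $\pp'$ of the cyclic submodule $S\bar m\subseteq M/\ell M$ satisfies $\pp'\supseteq\pp+\ell S$. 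Applying the induction hypothesis to $M/\ell M$ (whose $Q$-grade equals $g-1$ by $M$-regularity of $\ell\in Q$) and $\pp'$ yields $g-1\le \cd(Q,S/\pp')=\dim S/(\pp'+P)\le \dim S/(\pp+\ell S+P)$, and condition \emph{(ii)} together with Krull's principal ideal theorem in the catenary quotient $S/(\pp+P)$ forces the last quantity to equal $\dim S/(\pp+P)-1=\cd(Q,S/\pp)-1$. Hence $g\le\cd(Q,S/\pp)$.

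The main obstacle is the simultaneous realization of \emph{(i)} and \emph{(ii)} by a single linear form in $Q$: these are independent prime-avoidance constraints on the $K$-subspace $\langle y_1,\dots,y_n\rangle$, and balancing them is precisely where the hypotheses $|K|=\infty$ and $\cd(Q,S/\pp)\ge 1$ are both needed.
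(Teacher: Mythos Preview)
Your proof is correct and follows essentially the same route as the paper's: both argue by induction on $\grade(Q,M)$, choose a bihomogeneous $M$-regular element in $Q$ that simultaneously avoids the minimal primes of $\pp+P$ (using $|K|=\infty$ and the fact that $\cd(Q,S/\pp)>0$ rules out $Q\subseteq\qq_i$), locate an associated prime of $M/\ell M$ above $\pp$, and conclude via the one-step drop in $\dim S/(\pp+P)$. The only differences are cosmetic: the paper cites \cite[Proposition 1.2.13]{BH} for the existence of $\qq\in\Ass(M/yM)$ with $\pp\subseteq\qq$, whereas you construct it explicitly via $\Soc_\pp(M)$ and graded Nakayama; and the paper invokes \cite[Lemmas 3.3--3.4]{AR2} for the prime-avoidance step, while you work directly with linear forms of bidegree $(0,1)$.
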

\begin{proof}
Here we follow the proof of \cite[Proposition 1.2.13]{BH}. Let $\pp \in \Ass M$. We proceed by induction on $\grade (Q,M)$. The claim is obvious if $\grade(Q,M)=0$. Now let $\grade(Q,M)=k>0$ and suppose inductively that the result has been proved for all finitely generated bigraded $S$-module $N$ such that  $\grade(Q,N)<k$.  We want to prove it for $M$. Since $\grade(Q, M)>0$, by \cite[Lemma 3.4]{AR2} there exists a bihomogeneous $M$-regular element $y\in Q$  which does not belong to any associated prime ideal of $M$ and not to any minimal prime ideal of $\Supp(M/PM)$ such that $\cd(Q, M/yM)=\cd(Q, M)-1$ and of course $\grade(Q, M/yM)=\grade(Q, M)-1$.  As in the proof of \cite[Proposition 1.2.13]{BH} we see that $\pp$ consists of zero divisors of $M/yM$.
Thus $\pp \subseteq \qq$ for some $\qq \in \Ass(M/yM)$. Since $y$ is $M$-regular, it follows that $y \not \in \pp$ while $y \in \qq$ and so $\pp \neq\qq$.  Note also that, as $y$ is $M$-regular and $\pp \in \Ass(M)$, we have that  $y$ is  $S/\pp$-regular and so $\grade(Q, S/\pp)>0$. Hence $\cd(Q, S/\pp)=\dim S/(P+\pp)>0$ by (1).   We claim that the  element $y$ may be chosen to avoid all the minimal prime ideal of  $\Supp(S/(P+\pp))$, too. Let  $\{\qq_1, \dots, \qq_r\}$ be the minimal prime ideals of  $\Supp(S/(P+\pp))$.  By  \cite[Lemma 3.3]{AR2} it suffices to show that  $Q\not \subseteq \qq_i$ for $i=1, \dots, r$. Suppose $Q \subseteq \qq_i$ for some $i$ where $i=1, \dots, r$. Since $P+\pp \subseteq \qq_i$, it follows  that $\qq_i=P+Q=\mm$, and hence $\dim S/(P+\pp)=\cd(Q, S/\pp)=0$, a contradiction. Using inductive hypothesis and the above observation we have
\begin{eqnarray*}
\grade(Q, M)-1 & = & \grade(Q, M/yM) \\ & \leq & \cd(Q, S/{\qq}) \\ & = &
\dim S/(P+\qq) \\ &<&  \dim S/(P+\pp)  =
\cd(Q, S/{\pp}),
\end{eqnarray*}
as desired.
\end{proof}
This in particular generalizes the following known results
\begin{Corollary}
Let $M$ be a finitely generated graded $K[y]$-module, then we have
\[
\depth M \leq \dim S/{\pp} \quad \text {for all} \quad \pp \in \Ass(M).
\]
In particular, $\depth M \leq \dim M$.
\end{Corollary}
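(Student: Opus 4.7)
The plan is to deduce this corollary directly from Proposition \ref{ass} by regarding the finitely generated graded $K[y]$-module $M$ as a bigraded $S$-module on which the irrelevant ideal $P$ acts as zero. Under this identification, the $K[y]$-module structure and the $S$-module structure agree, and every regular element on $M$ must lie in $Q$. Consequently $\depth_{K[y]} M = \grade(Q,M)$, which puts us in position to apply the proposition.

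Next I would compute $\cd(Q, S/\pp)$ for each associated prime $\pp$ of $M$. Since $P$ annihilates $M$, any $\pp \in \Ass_S M$ contains $P$, so $P+\pp = \pp$. Invoking the identity (\ref{3}) from the start of the section, we obtain
\[
\cd(Q, S/\pp) \;=\; \dim (S/\pp)/P(S/\pp) \;=\; \dim S/(P+\pp) \;=\; \dim S/\pp.
\]
Plugging these two observations into the conclusion of Proposition \ref{ass} yields $\depth M \leq \dim S/\pp$ for every $\pp \in \Ass M$, which is the first assertion. For the ``in particular'' part, choose any $\pp \in \Ass M$ and use $\dim S/\pp \leq \dim M$ to conclude $\depth M \leq \dim M$.

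The one technical point to address is the assumption $|K|=\infty$ that appears in Proposition \ref{ass}. When $K$ is finite, I would reduce to the infinite-field case by the standard flat base change $K \to K(t)$: this preserves depth, dimension and the set of associated primes (after contraction), so the inequality transfers back to $M$ itself. No further input is needed, so the main substance of the argument is simply the translation between the $K[y]$-module invariants of $M$ and the bigraded $S$-module invariants appearing in Proposition \ref{ass}; the only mildly delicate step is verifying that the cohomological dimension $\cd(Q, S/\pp)$ coincides with the ordinary Krull dimension $\dim S/\pp$ for the primes that arise.
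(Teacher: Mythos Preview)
Your proposal is correct and matches the paper's approach: the paper presents this corollary without proof, merely as the specialization of Proposition~\ref{ass} obtained by taking $m=0$ (so that $S=K[y]$, $P=(0)$, $Q$ is the graded maximal ideal, $\grade(Q,M)=\depth M$, and $\cd(Q,S/\pp)=\dim S/\pp$). Your version, which instead keeps $m$ arbitrary and views $M$ as an $S$-module annihilated by $P$, is an equivalent way to read off the same specialization; the extra care you take with the finite-field case via base change to $K(t)$ is a reasonable elaboration that the paper omits.
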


\begin{Corollary}
Let $M$ be a finitely generated bigraded $S$-module, then we have
\[
\grade(Q,M)\leq\cd(Q,M).
\]

\end{Corollary}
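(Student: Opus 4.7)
The plan is to deduce this as an immediate corollary of Proposition \ref{ass} combined with formula (\ref{2}). Specifically, Proposition \ref{ass} gives
\[
\grade(Q,M) \leq \cd(Q, S/\pp) \quad \text{for every } \pp \in \Ass(M),
\]
and since the left-hand side does not depend on $\pp$, one may take the maximum of the right-hand side over $\pp \in \Ass(M)$. By (\ref{2}) this maximum equals $\cd(Q,M)$, yielding the inequality.

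A small subtlety is that Proposition \ref{ass} is stated under the hypothesis $|K|=\infty$, whereas the corollary makes no such assumption. The way I would handle this is by a standard base-change: if $K$ is finite, replace $K$ by an infinite extension $K'$ (e.g.\ $K' = K(t)$), set $S' = S\tensor_K K'$ and $M' = M \tensor_K K'$. Since $S \to S'$ is faithfully flat and preserves the bigrading, both $\grade(Q,-)$ and $\cd(Q,-)$ are unchanged when passing from $M$ to $M'$ (for the latter, local cohomology commutes with flat base change). Thus the inequality for $M'$ over $S'$ transfers back to the inequality for $M$ over $S$.

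There is essentially no obstacle here; the substance lies entirely in Proposition \ref{ass}. The only thing worth being careful about is that $Q$ is understood as the ideal $QS'$ after base change, and that the formula (\ref{2}) continues to apply, which is immediate from the definitions. This corollary is then the natural generalization of the classical bound $\depth M \leq \dim M$ recorded in the preceding corollary.
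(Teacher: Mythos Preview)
Your argument is correct and matches the paper's proof exactly: the paper simply says the assertion follows from Proposition~\ref{ass} together with the formula $\cd(Q,M)=\max\{\cd(Q,S/\pp):\pp\in\Ass(M)\}$. You were more careful than the paper in noting that Proposition~\ref{ass} carries the hypothesis $|K|=\infty$ while the corollary does not, and your base-change fix is fine (though one could also observe that $\grade(Q,M)\leq\cd(Q,M)$ holds for entirely elementary reasons, since the former is the least and the latter the greatest index of nonvanishing local cohomology).
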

\begin{proof}
The assertion follows from Proposition \ref{ass} and (3).
\end{proof}
\begin{Definition}{\em
Let $M$ be a finitely generated bigraded $S$-module. We call $M$ to be relative unmixed with respect to $Q$ if  $\cd(Q, M)=\cd(Q, S/{\pp})$ for all $\pp \in \Ass(M)$.}
\end{Definition}
In the following we observe that relative Cohen--Macaulay modules with respect to $Q$ are relative unmixed with respect to $Q$. In particular, all associated prime ideals of $M$ are minimal in $\Supp M/PM$.
\begin{Corollary}
\label{relative}
  Let $M$ be a finitely generated bigraded $S$-module which is relative Cohen--Macaulay with respect to $Q$, then $M$ is relative unmixed with respect to $Q$.
\end{Corollary}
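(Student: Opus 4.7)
The plan is to deduce this directly by sandwiching $\cd(Q, S/\pp)$ between $\grade(Q,M)$ and $\cd(Q,M)$, which coincide by the hypothesis. The two ingredients are already in hand: Proposition \ref{ass} supplies the lower bound $\grade(Q,M) \leq \cd(Q, S/\pp)$ for every $\pp \in \Ass(M)$, and formula (3) supplies the upper bound $\cd(Q, S/\pp) \leq \cd(Q,M)$ since $\pp \in \Ass(M) \subseteq \Supp(M)$.

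More precisely, fix an arbitrary $\pp \in \Ass(M)$. Applying Proposition \ref{ass} to $M$ yields
\[
\grade(Q,M) \leq \cd(Q, S/\pp).
\]
On the other hand, formula (3) gives
\[
\cd(Q, S/\pp) \leq \max\{\cd(Q, S/\qq) : \qq \in \Ass(M)\} = \cd(Q,M).
\]
Combining these two inequalities with the assumption $\grade(Q,M) = \cd(Q,M)$ forces equality throughout, so $\cd(Q, S/\pp) = \cd(Q,M)$, which is precisely the definition of relative unmixedness with respect to $Q$.

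There is really no obstacle here: the statement is an immediate consequence of the preceding proposition together with the standard formula for cohomological dimension via associated primes. The only thing worth remarking on in the writeup is that Proposition \ref{ass} was proved under the assumption $|K| = \infty$, so strictly speaking the same hypothesis should be inherited here (or one invokes base change to an infinite extension field, noting that $\Ass$ and $\cd(Q,-)$ behave well under such flat extensions).
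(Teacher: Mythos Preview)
Your proof is correct and follows essentially the same approach as the paper: sandwich $\cd(Q,S/\pp)$ between $\grade(Q,M)$ and $\cd(Q,M)$ using Proposition~\ref{ass} for the lower bound, then conclude by the relative Cohen--Macaulay hypothesis. The only cosmetic difference is that for the upper bound the paper invokes the monomorphism $S/\pp\hookrightarrow M$ together with formula~(2), whereas you cite formula~(3) directly; these are equivalent here. Your remark about the hypothesis $|K|=\infty$ being implicitly inherited from Proposition~\ref{ass} is a valid observation.
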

\begin{proof}
By Proposition \ref{ass}, we have $\grade(Q, M)\leq \cd(Q, S/{\pp})$ for all $\pp \in \Ass(M)$. On the other hand, since $\pp \in \Ass(M) $, we have the monomorphism $S/{\pp}\rightarrow M$ which yields $\cd(Q, S/{\pp}) \leq \cd(Q, M)$ by (2). Thus the conclusion follows.
\end{proof}
\begin{Remark}{\em  Relative unmixed modules with respect to $Q$ need not to be relative Cohen--Macaulay with respect to $Q$.  We consider the hypersurface ring $R=S/fS$ where  $f\in S$ is a bihomogeneous polynomial of degree $(a,b)$ with  $a, b>0$  and $f$ is not monomial as well.  Note that $\Ass(R)=\{(f)\}$. One has $\grade(Q, R)=n-1$ and $\cd(Q, R)=n$. Thus $R$ is relative unmixed with respect to $Q$ but not relative Cohen--Macaulay with respect to $Q$ .}
\end{Remark}
The converse of Corollary \ref{relative} holds under the following additional assumption.
\begin{Proposition}
Let $M$ be a finitely generated bigraded $S$-module for which every quotient of $M$ is relative unmixed with respect to $Q$. Then $M$ is relative Cohen--Macaulay with respect to $Q$.
\end{Proposition}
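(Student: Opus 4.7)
The plan is to argue by induction on $q=\cd(Q,M)$. The hypothesis is used in two ways: first, taking $N=0$, it tells us that $M$ itself is relative unmixed with respect to $Q$; second, and more essentially, it is inherited by $M/yM$ for any $y$, since quotients of $M/yM$ are quotients of $M$. This makes induction on $q$ natural.

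For the base case $q=0$, one simply observes that $\grade(Q,M)\leq\cd(Q,M)=0$ (by the corollary preceding this proposition, or directly), so $\grade(Q,M)=\cd(Q,M)=0$ and $M$ is relative Cohen--Macaulay. For the inductive step, assume $q>0$ and that the statement holds for modules with smaller $\cd(Q,-)$. The first task is to prove that $\grade(Q,M)>0$. If on the contrary $\grade(Q,M)=0$, then $Q\subseteq\pp$ for some $\pp\in\Ass(M)$; but then $Q(S/\pp)=0$ forces $\cd(Q,S/\pp)=0$, which by the relative unmixedness of $M$ equals $\cd(Q,M)=q>0$, a contradiction.

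Since $\grade(Q,M)>0$, invoking \cite[Lemma 3.4]{AR2} exactly as in the proof of Proposition \ref{ass} yields a bihomogeneous $M$-regular element $y\in Q$ which lies outside every associated prime of $M$ and every minimal prime of $\Supp(M/PM)$, and which therefore satisfies
\[
\grade(Q,M/yM)=\grade(Q,M)-1 \quad\text{and}\quad \cd(Q,M/yM)=\cd(Q,M)-1=q-1.
\]
Every quotient of $M/yM$ is also a quotient of $M$, hence is relative unmixed with respect to $Q$ by assumption; so $M/yM$ satisfies the hypothesis of the proposition with $\cd(Q,M/yM)=q-1$. By the inductive hypothesis, $M/yM$ is relative Cohen--Macaulay with respect to $Q$, i.e.\ $\grade(Q,M/yM)=q-1$. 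Combining this with the displayed equation gives $\grade(Q,M)=q=\cd(Q,M)$, which is the required conclusion.

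The only delicate step is the selection of the element $y$: we must simultaneously guarantee that $y$ is a bihomogeneous non-zero-divisor on $M$ and that it strictly decreases $\cd(Q,M)$. Both requirements reduce to an avoidance argument over finitely many primes (the associated primes of $M$, which is standard, and the minimal primes of $\Supp(M/PM)$, which governs $\cd(Q,-)$ via \eqref{3}), and this is precisely what \cite[Lemma 3.4]{AR2} provides. Once this lemma is in hand the proof reduces to the clean induction outlined above.
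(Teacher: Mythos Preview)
Your proof is correct and follows essentially the same approach as the paper's: induction on $q=\cd(Q,M)$, using relative unmixedness of $M$ to force $\grade(Q,M)>0$ when $q>0$, then invoking \cite[Lemma~3.4]{AR2} to find a bihomogeneous $M$-regular $y\in Q$ that drops both $\grade$ and $\cd$ by one, and finishing by noting that the hypothesis passes to $M/yM$ since its quotients are quotients of $M$. Your write-up is in fact slightly more explicit than the paper's (you spell out why the hypothesis is inherited by $M/yM$ and close the induction by computing $\grade(Q,M)$), but the argument is the same.
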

\begin{proof}
We proceed by induction on  $q=\cd(Q, M).$ The claim is obvious for $q=0$. Assume $q>0$ and the result has been proved for all finitely generated bigraded $S$-module of cohomological dimension less than $q$. We may assume that $\grade(Q, M)>0$. Otherwise, $Q\subseteq \pp$ for some $\pp \in \Ass(M)$. Since $M$ is relative unmixed with respect to $Q$, we have $0<q=\cd(Q, S/{\pp})=\dim S/(P+\pp)\leq \dim S/(P+Q)=0$, a contradiction. By \cite[Lemma 3.4]{AR1} there exists an $M$-regular bihomogeneous element $y\in Q$ such that $\cd(Q, M/yM)=\cd(Q, M)-1$ as well as $\grade(Q, M/yM)=\grade(Q,M)-1$. Our assumption implies that $M/yM$ is relative unmixed with respect to $Q$ and hence our induction hypothesis says that $M/yM$ is relative Cohen--Macaulay with respect to $Q$. Therefore, $M$ is relative Cohen--Macaulay with respect to $Q$, as desired.
\end{proof}


The following question arises from Question 1.1:
\begin{Question}
\label{mix}
Let $(R, \mm)$ be a Noetherian local ring, $I$ and $J$  two ideals of $R$ such that $I+J=\mm$ and $M$ a finitely generated $R$-module. If $M$ is relative Cohen--Macaulay with respect to $I$ and relative unmixed with respect to $J$.  Is $M$ itself unmixed?
\end{Question}
\begin{Remark}{\em
In Example 1.2, we note that $\pp$ is  the only associated prime $S/\pp$ and so $\Ass(R)=\{\pp, (0)\}$. We have $\dim S/\pp=n< \dim R=2n$ while $R$ is relative Cohen--Macaulay with respect to $P$ and $Q$. Therefore the question  does not hold for $\dim M=2$.}
\end{Remark}
In the following we give several cases in which the Question \ref{mix} holds.
\begin{Proposition}
Let $|K|=\infty$ and let $M$ be relative Cohen--Macaulay  with respect to $P$ with  $\cd(P,M)=p \leq 1$ or $p=\dim M$ and relative unmixed with respect to $Q$ with $\cd(Q,M)=q\geq 0$. Then $M$ is unmixed.
 \end{Proposition}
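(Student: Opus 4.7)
The plan is to show that for every $\pp\in\Ass M$ the quotient $S/\pp$ is itself relatively Cohen--Macaulay (with respect to $P$ when $p\le 1$, and with respect to $Q$ when $p=\dim M$), so that Theorem \ref{main1} applied to $S/\pp$ yields $\dim S/\pp=\cd(P,S/\pp)+\cd(Q,S/\pp)$. Combined with Theorem \ref{main1} applied to $M$ itself (giving $\dim M=p+q$) and with the identifications $\cd(P,S/\pp)=p$ and $\cd(Q,S/\pp)=q$ that I will establish, this produces $\dim S/\pp=p+q=\dim M$ for every associated prime, which is the desired unmixedness.

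To compute the cohomological dimensions of $S/\pp$, I use that $\pp$ is bihomogeneous, as it is an associated prime of a finitely generated bigraded module. The monomorphism $S/\pp\hookrightarrow M$ together with formula~(2) delivers $\cd(P,S/\pp)\le p$ and $\cd(Q,S/\pp)\le q$. In the reverse direction, $M$ being relative Cohen--Macaulay with respect to $P$ gives $\grade(P,M)=p$, and the symmetric version of Proposition \ref{ass} (whose proof carries over verbatim with $P$ and $Q$ interchanged) yields $p\le\cd(P,S/\pp)$. The hypothesis that $M$ is relative unmixed with respect to $Q$ gives $\cd(Q,S/\pp)=q$ directly, establishing both equalities.

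For the relative Cohen--Macaulay claim on $S/\pp$: if $p=0$, then $\grade(P,S/\pp)=0=\cd(P,S/\pp)$ trivially; if $p=1$, then $\cd(P,S/\pp)=1>0$ forces $P\not\subseteq\pp$, so some $x_i$ lies outside $\pp$ and, being a nonzero bihomogeneous element of the domain $S/\pp$, is $S/\pp$-regular, whence $1\le\grade(P,S/\pp)\le\cd(P,S/\pp)=1$. In both subcases $S/\pp$ is relatively Cohen--Macaulay with respect to $P$. If instead $p=\dim M$, then Theorem \ref{main1} applied to $M$ forces $q=0$, and the trivial equality $\grade(Q,S/\pp)=0=\cd(Q,S/\pp)$ shows that $S/\pp$ is relatively Cohen--Macaulay with respect to $Q$. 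In every case Theorem \ref{main1} applied to $S/\pp$ now yields $\dim S/\pp=p+q=\dim M$, so $M$ is unmixed. The only point meriting explicit mention is the repeated use of Theorem \ref{main1} and Proposition \ref{ass} with $P$ and $Q$ interchanged, justified by the symmetric roles of the two irrelevant ideals in the bigraded setup.
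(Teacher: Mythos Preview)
Your proof is correct and follows essentially the same strategy as the paper: for each $\pp\in\Ass M$ you establish $\cd(P,S/\pp)=p$ and $\cd(Q,S/\pp)=q$, verify that $S/\pp$ is relatively Cohen--Macaulay (with respect to $P$ when $p\le 1$, with respect to $Q$ when $p=\dim M$), and then apply Theorem~\ref{main1} to both $M$ and $S/\pp$. The only cosmetic difference is in the case $p=1$: the paper deduces $\grade(P,S/\pp)\ge 1$ from the injection $H^0_P(S/\pp)\hookrightarrow H^0_P(M)=0$, whereas you argue directly that $P\not\subseteq\pp$ forces some $x_i$ to be regular on the domain $S/\pp$; both are equally valid.
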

 \begin{proof}
 Let $\pp \in \Ass M$. We first assume that $p=0$ and so $\cd(P,M)=\cd(P,S/\pp)=0$. Hence Theorem \ref{main1} yields  $\cd(Q,M)=\dim M$ and $\cd(Q, S/\pp)=\dim S/\pp$. Therefore relative unmixedness of $M$ with respect to $Q$ results that  $M$ is unmixed. Now let $p=1$ and so $\cd(P,M)=\cd(P,S/\pp)=1$. We claim that $S/\pp$ is relative Cohen--Macaulay with respect to $P$. Assume $\grade(P,S/\pp)=0$. The exact sequence $0 \longrightarrow S/\pp \longrightarrow M$ yields the exact sequence $0 \longrightarrow H^0_P(S/\pp) \longrightarrow H^0_P(M)$ and hence $\grade(P,M)=0$, a contradiction. By Theorem \ref{main1} we have
 \[
 \dim M=\cd(P,M)+\cd(Q,M)=\cd(P,S/\pp)+\cd(Q,S/\pp)=\dim S/\pp.
 \]
 The last equality follows again from Theorem \ref{main1}. Finally, we assume that  $p=\dim M$. Theorem \ref{main1} yields  $q=0$ and hence by a similar argument as the first part,  $M$ is unmixed.
 \end{proof}
 \begin{Corollary}
 \label{dim3}
 Let $\dim M\leq 3$ and $|K|=\infty$. If $M$ is relative Cohen--Macaulay with respect to $P$ and relative unmixed with respect to $Q$. Then $M$ is unmixed.
 \end{Corollary}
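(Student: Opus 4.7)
The plan is to reduce the problem to Proposition 1.15 and then dispose of the single remaining $(\cd(P,M),\cd(Q,M))$-configuration by a direct dimension argument on an associated prime. Since $M$ is relative Cohen--Macaulay with respect to $P$ and $|K|=\infty$, Theorem \ref{main1} gives $p+q=\dim M$, where $p=\cd(P,M)$ and $q=\cd(Q,M)$. With $\dim M\leq 3$, the only pair $(p,q)$ not already handled by Proposition 1.15 (which covers $p\leq 1$ and $p=\dim M$) is $(p,q)=(2,1)$ with $\dim M=3$, so I only need to establish unmixedness in that remaining case.

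Fix $\pp\in\Ass M$. Corollary \ref{relative}, applied with $P$ in place of $Q$ (the argument is symmetric in $P$ and $Q$), shows that $M$ is relative unmixed with respect to $P$, so $\cd(P,S/\pp)=p=2$; the hypothesis on $Q$ gives $\cd(Q,S/\pp)=q=1$. Since $\dim S/\pp\geq\cd(P,S/\pp)=2$ and $\dim S/\pp\leq\dim M=3$, the target $\dim S/\pp=3$ amounts to ruling out $\dim S/\pp=2$.

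Suppose for contradiction $\dim S/\pp=2$. By the identity $\cd(P,S/\pp)=\dim(S/\pp)/Q(S/\pp)$, this quantity equals $\dim S/\pp$. Now $S/\pp$ is an affine domain over $K$, hence catenary and equidimensional, so for every ideal $I\subset S/\pp$ one has $\dim(S/\pp)/I=\dim S/\pp-\height I$. Applied to $I=Q(S/\pp)$ this forces $\height Q(S/\pp)=0$, which in a domain means $Q\subseteq\pp$. But then $P+\pp\supseteq P+Q=\mm$, which yields $\cd(Q,S/\pp)=\dim(S/\pp)/P(S/\pp)=0$, contradicting $\cd(Q,S/\pp)=1$. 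This excludes $\dim S/\pp=2$, so $\dim S/\pp=3=\dim M$ and $M$ is unmixed.

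The main obstacle is really the one residual pair $(p,q)=(2,1)$, which Proposition 1.15 does not touch; the other configurations are immediate. The crux is to exploit the fact that relative Cohen--Macaulayness with respect to $P$ promotes, via Corollary \ref{relative}, to $\cd(P,S/\pp)=p$ for every associated prime $\pp$, whereupon the catenarity/equidimensionality of the affine domain $S/\pp$ forces either $Q\subseteq\pp$ (ruled out by $\cd(Q,S/\pp)=q>0$ since $P+Q=\mm$) or the required equality $\dim S/\pp=\dim M$.
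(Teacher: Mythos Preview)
Your proof is correct. The paper states Corollary~\ref{dim3} without proof, presumably as an immediate consequence of Proposition~1.15; you correctly observe that Proposition~1.15 only covers $p\leq 1$ and $p=\dim M$, so that for $\dim M=3$ the pair $(p,q)=(2,1)$ genuinely requires a separate argument, and you supply one.

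Your route for that residual case---bounding $\dim S/\pp$ between $2$ and $3$ and excluding $2$ via the dimension formula for the affine domain $S/\pp$---is valid. A slightly shorter variant, closer in spirit to the proof of Proposition~1.15, runs as follows: from $\cd(Q,S/\pp)=1>0$ and $P+Q=\mm$ one sees (exactly as in your contradiction) that $Q\not\subseteq\pp$, hence $\grade(Q,S/\pp)\geq 1=\cd(Q,S/\pp)$, so $S/\pp$ is itself relative Cohen--Macaulay with respect to $Q$; Theorem~\ref{main1} applied to $S/\pp$ then gives $\dim S/\pp=\cd(P,S/\pp)+\cd(Q,S/\pp)=2+1=3=\dim M$ directly. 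This avoids invoking catenarity and the height--dimension formula, but your argument reaches the same conclusion by equally legitimate means.
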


 \begin{Proposition}
Let $M_1$ and $M_2$ be two non zero finitely generated graded module over $K[x]$ and $K[y]$, respectively and let $|K|=\infty$. Set $M=M_1\tensor_K M_2$ and assume that $K[x]/{\pp_1}\tensor_K K[y]/{\pp_2}$ is an integral domain for all $\pp_1\in \Ass M_1$ and $\pp_2 \in \Ass M_2$. If $M$ is relative Cohen--Macaulay with respect to $P$ and relative unmixed with respect to $Q$, then $M$ is unmixed.
 \end{Proposition}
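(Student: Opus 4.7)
The plan is to reduce unmixedness of $M$ to unmixedness of each tensor factor separately, and then to derive each of these from the hypotheses on $M$.

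The first step is to invoke the standard description of associated primes of a tensor product over a field: under the integral-domain assumption on $K[x]/\pp_1 \tensor_K K[y]/\pp_2 \iso S/(\pp_1 S + \pp_2 S)$, the prime $\pp_1 S + \pp_2 S$ is the unique associated prime of $S/(\pp_1 S + \pp_2 S)$, and one has
$$\Ass_S M = \{\pp_1 S + \pp_2 S : \pp_1 \in \Ass_{K[x]} M_1,\ \pp_2 \in \Ass_{K[y]} M_2\}.$$
Combined with the identities $\dim M = \dim M_1 + \dim M_2$ and $\dim S/(\pp_1 S + \pp_2 S) = \dim K[x]/\pp_1 + \dim K[y]/\pp_2$, this reduces the assertion to showing that both $M_1$ and $M_2$ are unmixed over their respective polynomial rings.

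For $M_1$, the argument in the proof of Proposition \ref{ab}(b) applies verbatim: from $H^i_P(M) \iso H^i_P(M_1) \tensor_K M_2$ together with the relative Cohen--Macaulayness of $M$ with respect to $P$ and the non-vanishing of $M_2$, one obtains $H^i_P(M_1)=0$ for all $i \neq p$. Since the top local cohomology $H^{\dim M_1}_{(x_1,\dots,x_m)}(M_1)$ of a finitely generated non-zero graded $K[x]$-module does not vanish, $\dim M_1 = p$ and $M_1$ is Cohen--Macaulay, hence unmixed.

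For $M_2$, I would compute cohomological dimensions with respect to $Q$ using (\ref{3}). Because $PM = (x_1,\dots,x_m)M_1 \tensor_K M_2$, the quotient $M/PM \iso (M_1/(x_1,\dots,x_m)M_1) \tensor_K M_2$ is a finite direct sum of copies of $M_2$ as a $K[y]$-module, so $\cd(Q,M) = \dim M_2$. For any $\pp = \pp_1 S + \pp_2 S \in \Ass M$, an analogous computation gives $\cd(Q,S/\pp) = \dim S/(P+\pp) = \dim K[y]/\pp_2$. Relative unmixedness of $M$ with respect to $Q$ then forces $\dim M_2 = \dim K[y]/\pp_2$ for every $\pp_2 \in \Ass M_2$, so $M_2$ is unmixed.

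The main obstacle is the first step: one must justify the description of $\Ass_S M$ under the integral-domain assumption, ruling out extra associated primes arising from the scalar extension $K \to K[x] \tensor_K K[y]$. A clean way to do this is to choose prime filtrations of $M_1$ and $M_2$ with cyclic quotients of the forms $K[x]/\pp_1$ and $K[y]/\pp_2$, tensor them over $K$ to obtain a filtration of $M$ whose successive quotients are the domains $S/(\pp_1 S + \pp_2 S)$, and then use the standard inclusion of associated primes through a filtration together with the fact that each $S/(\pp_1 S + \pp_2 S)$ has $\pp_1 S + \pp_2 S$ as its only associated prime.
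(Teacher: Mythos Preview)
Your overall strategy matches the paper's: identify each $\pp\in\Ass_S M$ as $\pp_1S+\pp_2S$ with $\pp_i\in\Ass M_i$, compute $\cd(Q,S/\pp)=\dim K[y]/\pp_2$ and the corresponding $P$-side quantity, and add them. The paper differs only in how it handles the $P$-side: rather than proving $M_1$ is Cohen--Macaulay, it invokes Corollary~\ref{relative} (relative Cohen--Macaulay with respect to $P$ implies relative unmixed with respect to $P$) to get $\cd(P,M)=\cd(P,S/\pp)=\dim K[x]/\pp_1$ directly, then applies Theorem~\ref{main1} for the sum. Your route through Cohen--Macaulayness of $M_1$ is a bit longer but yields a stronger intermediate statement; both work.

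There is, however, a genuine gap in your justification of the description of $\Ass_S M$. The primes appearing in a prime filtration of $M_i$ need not lie in $\Ass M_i$; they can be arbitrary primes in $\Supp M_i$. Consequently the tensored filtration of $M$ has successive quotients $K[x]/\qq_1\tensor_K K[y]/\qq_2$ where the integral-domain hypothesis is \emph{not} assumed, and you cannot conclude that each such quotient contributes only the prime $\qq_1S+\qq_2S$. Your argument therefore does not establish the needed inclusion $\Ass_S M\subseteq\{\pp_1S+\pp_2S:\pp_i\in\Ass M_i\}$. The paper avoids this by citing \cite[Corollary~3.7]{STY}, which gives the general equality
\[
\Ass_S(M)=\bigcup_{\pp_1\in\Ass_{K[x]}M_1}\ \bigcup_{\pp_2\in\Ass_{K[y]}M_2}\Ass_S\bigl(K[x]/\pp_1\tensor_K K[y]/\pp_2\bigr),
\]
and only then uses the domain assumption to identify each inner set with $\{\pp_1S+\pp_2S\}$. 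Once you replace the filtration argument by this reference (or reprove it), the rest of your proof goes through.
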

 \begin{proof}
 Let $\pp \in \Ass(M)$. Note that
 \[
 \Ass_S(M)=\bigcup_{\pp_1\in \Ass_{K[x]}(M_1)} \bigcup_{\pp_2\in \Ass_{K[y]}(M_2)} \Ass_S(K[x]/{\pp_1}\tensor_K K[y]/{\pp_2}),
 \]
 see \cite[Corollary 3.7]{STY}. Thus there exist $\pp_1\in \Ass_{K[x]}(M_1)$ and $\pp_2\in \Ass_{K[y]}(M_2)$ such that $\pp \in \Ass_S(K[x]/{\pp_1}\tensor_K K[y]/{\pp_2})=\Ass(S/\pp_1S+\pp_2S)$. By our assumption $S/(\pp_1S+\pp_2S)$ is an integral domain and so $\Ass(S/\pp_1S+\pp_2S)=\{\pp_1S+\pp_2S \}$. Hence $\pp=\pp_1+\pp_2$. Since $M$ is relative Cohen--Macaulay with respect to $P$, it follows that $M$ is relative unmixed with respect to $P$ and so we have
 \[
 \cd(P,M)=\cd(P,S/{\pp})=\dim S/(Q+\pp)=\dim S/(Q+\pp_1)=\dim K[x]/{\pp_1}.
 \]
 On the other hand, since $M$ is relative unmixed with respect to $Q$, we have
 \[
  \cd(Q,M)=\cd(Q,S/{\pp})=\dim S/(P+\pp)=\dim S/(P+\pp_2)=\dim K[y]/{\pp_2}.
 \]
Thus by Theorem \ref{main1} and \cite[Corollary 2.3]{STY}, we have
\[
\dim M=\cd(P,M)+\cd(Q,M)=\dim K[x]/{\pp_1}+\dim K[y]/{\pp_2}=\dim S/\pp,
\]
as desired.
 \end{proof}
\begin{Proposition}
 Let  $M$ be a finitely generated bigraded $S$-module such that every cyclic submodule of $M$ is pure. Let $|K|=\infty$ and assume $M$ is relative Cohen--Macaulay with respect to $P$ with $\cd(P, M)=p$ and relative unmixed with respect to $Q$ with $\cd(Q, M)=q$. Then, $M$ is unmixed.
  \end{Proposition}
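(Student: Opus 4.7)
The plan is to prove, for each $\pp\in\Ass M$, that the cyclic submodule $S/\pp\hookrightarrow M$ is itself relative Cohen--Macaulay with respect to $P$; once this is done, Theorem~\ref{main1} applied to $S/\pp$ will force $\dim S/\pp=p+q=\dim M$, which is unmixedness.

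First I would collect what the hypotheses immediately deliver. Theorem~\ref{main1} applied to $M$ yields $\dim M=p+q$. Since $M$ is relative Cohen--Macaulay with respect to $P$, Corollary~\ref{relative} gives $\cd(P,S/\pp)=p$ for every $\pp\in\Ass M$, and the relative unmixedness hypothesis with respect to $Q$ gives $\cd(Q,S/\pp)=q$. In particular, for every $\pp\in\Ass M$ the sum $\cd(P,S/\pp)+\cd(Q,S/\pp)$ already equals $p+q$; the task is to recognize this sum as $\dim S/\pp$.

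Fix $\pp\in\Ass M$ and choose $m\in M$ with $\Ann(m)=\pp$, so that $Sm\iso S/\pp$ is a cyclic submodule of $M$, pure in $M$ by hypothesis. I read purity as pure-submodule, so tensoring the inclusion $Sm\hookrightarrow M$ with any $S$-module preserves injectivity; concretely $Sm\cap xM=xSm$ for every $x\in S$. This is precisely what is needed to promote an $M$-regular sequence in $P$ to an $Sm$-regular sequence: inductively, the injection $Sm/(x_1,\ldots,x_{i-1})Sm\hookrightarrow M/(x_1,\ldots,x_{i-1})M$ produced by purity transports non-zero-divisibility of $x_i$ from the right-hand module to the left. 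Hence $\grade(P,Sm)\geq\grade(P,M)=p$, while $\grade(P,Sm)\leq\cd(P,Sm)=\cd(P,S/\pp)=p$. Equality throughout says that $S/\pp$ is relative Cohen--Macaulay with respect to $P$, and then Theorem~\ref{main1} gives $\dim S/\pp=p+q=\dim M$.

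The step I expect to be the main obstacle is pinning down the intended reading of ``pure cyclic submodule'' and justifying the passage from purity to the grade comparison $\grade(P,Sm)\geq\grade(P,M)$. Under the pure-submodule reading sketched above the argument closes at once. If purity is instead meant in the sense of equidimensionality of $Sm$ as an abstract module, then one would pair incomparable primes $\pp_1,\pp_2\in\Ass M$ via elements $m_i$ with $\Ann(m_i)=\pp_i$, observe that $Sm_1\cap Sm_2=0$ (a nonzero common element would have annihilator equal to both $\pp_1$ and $\pp_2$), and use purity of the cyclic submodule $S(m_1+m_2)\iso S/(\pp_1\cap\pp_2)$ to conclude $\dim S/\pp_1=\dim S/\pp_2$; a supplementary argument (using the numerical constraints $\cd(P,S/\pp)=p$, $\cd(Q,S/\pp)=q$ for every $\pp\in\Ass M$) would then be required to exclude embedded primes.
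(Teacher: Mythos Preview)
Your proposal is correct and follows essentially the same route as the paper: fix $\pp\in\Ass M$, use purity of the cyclic submodule $S/\pp\hookrightarrow M$ to keep the injection after reducing modulo a maximal $M$-sequence $f_1,\dots,f_p$ in $P$, deduce $\grade(P,S/\pp)\ge p=\cd(P,S/\pp)$, and finish with Theorem~\ref{main1}. The paper indeed intends ``pure'' in the pure-submodule sense you identified, so your alternative equidimensional reading and its supplementary argument are unnecessary.
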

  \begin{proof}
Let $\pp \in \Ass(M)$. We claim that $S/{\pp}$ is relative Cohen--Macaulay with respect to $P$. Let $f_1, \dots, f_p$ be a maximal $M$-sequence in $P$. Since $S/{\pp}$ is a cyclic submodule of $M$, the exact sequence $0 \longrightarrow S/{\pp}\longrightarrow M$ yields the exact sequence  $0 \longrightarrow S/\big({\pp}+(f_1, \dots, f_p)\big)\longrightarrow M/(f_1, \dots, f_p)M$. Since  $f_i\not \in Z(M/(f_1, \dots, f_{i-1})M)$ for all $i=1, \dots, p$, it follows that $f_i\not \in Z(S/\big({\pp}+(f_1, \dots, f_{i-1}))\big)$ for all $i=1, \dots, p$. Thus $f_1, \dots, f_p$ is an $S/{\pp}$-sequence in $P$ which may not be maximal. Hence  $\grade(P,S/{\pp})\geq p$. On the other hand, relative Cohen--Macaulayness of $M$ with respect to $P$ results that $M$ is relative unmixed with respect to $P$ and we have $\cd(P,M)=\cd(P,S/{\pp})=p$. Thus $\grade(P,S/{\pp})\geq p=\cd(P,S/{\pp})$. We conclude that  $\grade(P,S/{\pp})=\cd(P,S/{\pp})=p$ and so $S/{\pp}$ is relative Cohen--Macaulay with respect to $P$. Using Theorem \ref{main1} we have
  \[
  \dim M=\cd(P,M)+\cd(Q,M)=\cd(P,S/\pp)+\cd(Q,S/\pp)=\dim S/\pp,
  \]
  as desired.
  \end{proof}

\begin{Proposition}
 Let $I \subseteq S$ be a monomial ideal and set $R=S/I$ with $|K|=\infty$. Assume that $R$ is relative Cohen--Macaulay with respect to $P$ with $\cd(P, R)=p$ and relative unmixed with respect to $Q$ with $\cd(Q, R)=q$. Then $R$ is unmixed.
 \end{Proposition}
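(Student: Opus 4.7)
The plan is to exploit the fact that every associated prime of a monomial ideal is itself a monomial prime, hence is generated by a subset of the variables. Thus for each $\pp \in \Ass(R)$ I may write $\pp = \pp_1 + \pp_2$, where $\pp_1$ is generated by some of the $x_i$'s and $\pp_2$ by some of the $y_j$'s; set $a = |\pp_1|$ and $b = |\pp_2|$. Then $S/(Q + \pp) \iso K[x]/\pp_1$ and $S/(P + \pp) \iso K[y]/\pp_2$, so the formulas $\cd(P,M) = \dim M/QM$ and $\cd(Q,M) = \dim M/PM$ recalled at the start of the section give $\cd(P, S/\pp) = m - a$ and $\cd(Q, S/\pp) = n - b$, and therefore
\[
\cd(P, S/\pp) + \cd(Q, S/\pp) = (m - a) + (n - b) = \dim S/\pp.
\]

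Next I would combine two unmixedness statements about $R$. Since $R$ is relative Cohen--Macaulay with respect to $P$, Corollary \ref{relative} (applied with the roles of $P$ and $Q$ interchanged, which is legitimate by the symmetry of the bigraded setup) yields that $R$ is relative unmixed with respect to $P$, so $\cd(P, S/\pp) = p$ for every $\pp \in \Ass(R)$. The hypothesis that $R$ is relative unmixed with respect to $Q$ gives $\cd(Q, S/\pp) = q$ for every $\pp \in \Ass(R)$. Substituting into the displayed identity produces $\dim S/\pp = p + q$ uniformly on $\Ass(R)$, and Theorem \ref{main1} applied to $R$ (again with $P$ and $Q$ swapped) gives $\dim R = p + q$. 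Therefore $\dim S/\pp = \dim R$ for every $\pp \in \Ass(R)$, which is exactly the condition for $R$ to be unmixed.

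The whole argument hinges on the combinatorial observation in the first paragraph: the monomial hypothesis is precisely what lets each associated prime decompose as $\pp_1 + \pp_2$, and this decomposition in turn forces $\cd(P, S/\pp) + \cd(Q, S/\pp)$ to equal $\dim S/\pp$ on the nose. Without this additivity one cannot transfer the two ``relative'' properties of $R$ into an absolute dimension equality among the associated primes. Beyond this, the remainder is a direct application of Corollary \ref{relative} and Theorem \ref{main1}, so I do not anticipate any further obstacle.
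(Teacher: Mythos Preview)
Your argument is correct and follows essentially the same route as the paper: both proofs use that associated primes of a monomial ideal are generated by variables, so each $\pp$ splits as $\pp_1+\pp_2$ with $\pp_1\subseteq P$ and $\pp_2\subseteq Q$; then relative Cohen--Macaulayness with respect to $P$ (via Corollary~\ref{relative}) and relative unmixedness with respect to $Q$ force $\cd(P,S/\pp)=p$ and $\cd(Q,S/\pp)=q$, while the variable-generated form of $\pp$ gives $\dim S/\pp = \cd(P,S/\pp)+\cd(Q,S/\pp)$, and Theorem~\ref{main1} supplies $\dim R = p+q$. The only cosmetic difference is that the paper lists the generators of $\pp$ explicitly, whereas you phrase the same count via $a=|\pp_1|$ and $b=|\pp_2|$.
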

 \begin{proof}
 Let $\pp \in \Ass(R)$. By Corollary 1.4,  we have $\cd(P,R)=\cd(P, S/{\pp})=p$ and by our assumption  $\cd(Q, R)=\cd(Q, S/{\pp})=q$.  Note that, the associated prime ideals of a monomial ideal are monomial prime ideals, see \cite[Corollary 1.3.9]{HH}.  The equality $\dim S/(Q+{\pp})=p$ guaranties the existence $x_{i_{p+1}},\dots, x_{i_{m}}\in \pp$ for which  $x_{i_{1}}, \dots,  x_{i_{p}} \not \in \pp$ where $x_{i_{1}}, \dots, x_{i_{m}} \in \{x_1,\dots, x_m\}$ for all $i$. On the other hand, the equality $\dim S/(P+\pp)=q$ guaranties the existence $y_{j_{q+1}},\dots, y_{j_{n}} \in \pp$ for which $y_{i_{1}}, \dots, y_{i_{q}} \not \in \pp$ where $y_{j_{1}}, \dots, y_{i_{n}} \in \{y_1,\dots, y_n\}$ for all $i$.   Thus, we conclude that  $\pp=(x_{i_{p+1}},\dots, x_{i_{m}}, y_{j_{q+1}},\dots, y_{j_{n}})$. Therefore $\dim S/\pp=p+q=\dim R$ which follows from Theorem \ref{main1}.
 \end{proof}
.
\begin{Remark}{\em
Let $M$ be a relative Cohen--Macaulay with respect to $Q$ with $\cd(Q,M)=q$ and relative unmixed with respect to $P$ with $\cd(P,M)=p$ for which  $M$ is unmixed. Then all the associated prime ideals of $M$ have the same height, namely $n+m-(p+q).$ }
\end{Remark}
In Corollary \ref{dim3}, we observed that the Question \ref{mix} holds for $\dim M\leq 3$. We end this section with the following question:
\begin{Question}{\em
Let $M$ be a finitely generated bigraded $S$-module of dimension 4 which is relative Cohen--Macaulay with respect to $P$ and $Q$. Is the module $M$ unmixed? }
\end{Question}

\section{The krull-dimension of the graded components of local cohomology of relative cohen--macaulay modules}

In this section we describe explicitly the krull-dimension of the graded components of local cohomology of relative Cohen--Macaulay modules. As a first result we have the following
\begin{Proposition}
\label{dim}
Let $M$ be a finitely generated bigraded $S$-module with $\cd(P,M)=p$, $\cd(Q,M)=q$  and $|K|=\infty$. The following statements hold:
\begin{itemize}
\item[{(a)}] if $M$ is relative Cohen--Macaulay with respect to $Q$, then  $\dim_S H^q_{Q}(M)=p$,
\item [(b)] if $M$ is relative Cohen--Macaulay with respect to $P$, then  $\dim_S H^p_{P}(M)=q$.
\end{itemize}
\end{Proposition}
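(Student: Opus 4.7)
The plan is to prove part (a); part (b) follows by the completely symmetric argument with the roles of $P$ and $Q$ exchanged. The three essential ingredients are Theorem \ref{main1} (which under the hypothesis of (a) gives $p+q=\dim M$), the Grothendieck composition spectral sequence
\[
E_2^{i,j}=H^i_P\bigl(H^j_Q(M)\bigr)\Longrightarrow H^{i+j}_{\mm}(M), \qquad \mm=P+Q,
\]
and the formula $\cd(P,M)=\dim M/QM$ from (\ref{3}).

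The key observation is that since $M$ is relative Cohen--Macaulay with respect to $Q$, the module $H^j_Q(M)$ vanishes except for $j=q$, so the spectral sequence degenerates and yields isomorphisms
\[
H^i_P\bigl(H^q_Q(M)\bigr)\;\iso\; H^{i+q}_{\mm}(M) \quad \text{for all } i.
\]
I would then establish the two inequalities separately. For the lower bound $\dim_S H^q_Q(M)\geq p$, I invoke Grothendieck's non-vanishing theorem, which guarantees $H^{\dim M}_{\mm}(M)\neq 0$, while Grothendieck vanishing gives $H^{i+q}_{\mm}(M)=0$ for $i+q>\dim M$. Combined with the isomorphism above and $\dim M=p+q$, this yields $\cd(P,H^q_Q(M))=p$, and since $\cd(P,-)$ of a module is bounded above by its Krull dimension, we get $p\leq \dim_S H^q_Q(M)$.

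For the reverse inequality, I use that $H^q_Q(M)$ is $Q$-torsion and is a subquotient built from localizations of $M$; consequently
\[
\Supp_S H^q_Q(M)\;\subseteq\; V(Q)\cap \Supp_S M \;=\; \Supp_S(M/QM),
\]
so $\dim_S H^q_Q(M)\leq \dim_S M/QM$, and the latter equals $p$ by (\ref{3}). Together the two bounds give $\dim_S H^q_Q(M)=p$.

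The only genuine subtleties are technical: verifying that Grothendieck's non-vanishing applies in the bigraded setup (which is routine after localizing at $\mm$, noting that dimension, cohomological dimension and the vanishing of local cohomology are preserved), and justifying the support containment $\Supp_S H^q_Q(M)\subseteq \Supp_S M$, which is standard from the \v{C}ech-complex description of local cohomology. Everything else amounts to combining Theorem \ref{main1}, the degenerate spectral sequence, and formula (\ref{3}).
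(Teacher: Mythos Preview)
Your proof is correct and follows essentially the same route as the paper: the paper also bounds $\dim_S H^q_Q(M)$ from above via $\Supp H^q_Q(M)\subseteq \Supp M/QM$ together with formula~(\ref{3}), and from below via the degenerate spectral sequence $H^i_P(H^j_Q(M))\Rightarrow H^{i+j}_{\mm}(M)$ combined with Theorem~\ref{main1} and Grothendieck (non-)vanishing to obtain $\cd(P,H^q_Q(M))=p$. The only cosmetic difference is that the paper states the upper bound first and phrases the support inclusion directly as $\Supp H^q_Q(M)\subseteq \Supp M/QM$.
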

\begin{proof}
In order to prove (a) we note that $\Supp H^q_{Q}(M)\subseteq \Supp M/QM$. Thus we have  $\dim H^q_{Q}(M)\leq \dim M/QM=\cd(P,M)=p$. Since $M$ is relative Cohen--Macaulay with respect to $Q$,  from the spectral sequence $H^i_P\big (H^j_{Q}(M)\big )\underset{i} \Longrightarrow H^{i+j}_{\mm}(M)$ we get the following isomorphisms of bigraded $S$-modules $H^i_P\big (H^q_{Q}(M)\big )\iso H^{i+q}_{\mm}(M)$ for all $i$. By  Theorem \ref{main1}  we have $p+q=\dim M$ which yields $H^p_P\big (H^q_{Q}(M)\big) \neq 0$ and $H^i_P\big (H^q_{Q}(M)\big)=0$ for $i>p$. Thus, we conclude that $\cd(P,H^q_{Q}(M))=p$,  which is always less than or equal $\dim H^q_{Q}(M)$. Therefore $\dim_S H^q_{Q}(M)=p$. Part (b) is proved in the same way.
\end{proof}
Let $M$ be a finitely generated bigraded $S$-module. Recall the finiteness dimension of $M$ relative to $Q$ by:
\[
f_Q(M)=\inf \{ i \in \NN: H^i_Q(M) \; \text {is not finitely generated} \; \}.
\]
\begin{Proposition}
\label{finitedim}
Let $M$ be a finitely generated bigraded $S$-module with $\cd(P,M)=p$, $\cd(Q,M)=q$ and  $p+q=\dim M$. Then the following statements hold:
\begin{itemize}
\item[{(a)}] if $f_Q(M)=\cd(Q, M)=q$, then  $\dim_{K[x]} H^q_{Q}(M)_j=p$ for $j\ll 0$.
\item [(b)] if $f_P(M)=\cd(P, M)=p$, then  $\dim_{K[y]} H^p_{P}(M)_j=q$ for $j\ll 0$.
\end{itemize}
\end{Proposition}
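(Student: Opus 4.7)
The plan is to refine the spectral-sequence argument of Proposition \ref{dim} by passing to $y$-graded components; (b) follows from (a) upon interchanging $P$ and $Q$. Two facts will be used throughout: for every $j$, the $K[x]$-module $H^q_Q(M)_j$ is finitely generated (visible from the Cech complex on $y_1,\dots,y_n$ computing $H^q_Q(M)$), and local cohomology with respect to $P$, being generated by variables of $K[x]$, is compatible with the $y$-grading, giving $H^i_P(N)_j \iso H^i_{(x_1,\dots,x_m)K[x]}(N_j)$ for any bigraded $S$-module $N$. Consequently $\dim_{K[x]} H^q_Q(M)_j$ equals the largest $i$ with $H^i_P(H^q_Q(M))_j\neq 0$.

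For the upper bound, note that $H^q_Q(M)$ is $Q$-torsion, so $\Supp_S H^q_Q(M)\subseteq V(Q)\cap \Supp(M) = \Supp(M/QM)$. Every prime of $K[x]$ in $\Supp_{K[x]} H^q_Q(M)_j$ lifts via $K[x]\iso S/Q$ to a prime of $S$ in $\Supp_S H^q_Q(M)$, whence $\dim_{K[x]} H^q_Q(M)_j \leq \dim (M/QM) = p$ by (\ref{3}), for every $j$.

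For the lower bound I use the Grothendieck spectral sequence
\[
E_2^{i,j'} = H^i_P\bigl(H^{j'}_Q(M)\bigr) \;\Longrightarrow\; H^{i+j'}_{\mm}(M), \qquad \mm=P+Q.
\]
For $j'<q$, the hypothesis $f_Q(M)=q$ gives $H^{j'}_Q(M)$ finitely generated; being also $Q$-torsion it is annihilated by some $Q^N$, hence is finitely generated over $S/Q^N$. As $S/Q^N$ has only finitely many non-zero $y$-graded pieces, the $y$-degrees of $H^{j'}_Q(M)$ are confined to a bounded interval. For $j\ll 0$ this forces $H^{j'}_Q(M)_j=0$, so $E_2^{i,j'}$ vanishes in the $j$-th $y$-component for all $i$ whenever $j'<q$. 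Combined with $H^{j'}_Q(M)=0$ for $j'>q$, only the row $j'=q$ remains nontrivial in the $j$-th component, the spectral sequence collapses, and
\[
H^i_P(H^q_Q(M))_j \;\iso\; H^{i+q}_{\mm}(M)_j \qquad \text{for } j\ll 0.
\]

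It therefore suffices to show $H^{p+q}_{\mm}(M)_j\neq 0$ for $j\ll 0$; this is the main obstacle. Since $p+q=\dim M$, bigraded Matlis duality identifies $H^{p+q}_\mm(M)$ with the graded $K$-dual of the canonical module $\omega_M$, reducing the claim to $\omega_M$ having nonzero components in arbitrarily large $y$-degree, equivalently to the existence of a top-dimensional associated prime of $M$ that does not contain $Q$. I expect to deduce this from the hypotheses: if every top-dimensional prime of $M$ contained $Q$, then (\ref{3}) would force $p=\dim(M/QM)=\dim M$, contradicting $q\geq f_Q(M)\geq 1$ (the latter holding since $H^0_Q(M)$ is always finitely generated).
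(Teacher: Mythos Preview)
Your argument is correct and follows the same route as the paper: pass the Grothendieck spectral sequence $H^i_P(H^{j'}_Q(M))\Rightarrow H^{i+j'}_\mm(M)$ to the $j$-th $y$-component, collapse it for $j\ll 0$ using $f_Q(M)=q$, and read off $H^i_{P_0}(H^q_Q(M)_j)\iso H^{i+q}_\mm(M)_j$. The paper then gets both bounds at once from this isomorphism: $i>p$ gives vanishing (upper bound), and $i=p$ gives the lower bound from $H^{p+q}_\mm(M)_j\neq 0$ for $j\ll 0$. Your separate support-theoretic upper bound is fine but redundant.

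Where you genuinely add something is in justifying $H^{p+q}_\mm(M)_j\neq 0$ for $j\ll 0$. The paper asserts this in one line from ``nonzero Artinian and not finitely generated'', but that implication is false in general for the $y$-grading (for instance $H^m_\mm(S/Q)$ is nonzero, Artinian, not finitely generated, yet sits entirely in $y$-degree $0$). Your Matlis-duality reduction to the existence of a top-dimensional minimal prime $\pp\in\Supp M$ with $Q\not\subseteq\pp$ is the right repair, and your derivation---if every such prime contained $Q$ then $p=\dim M/QM=\dim M$, contradicting $q=f_Q(M)\geq 1$---is correct; such a $\pp$ lies in $\Ass\omega_M$, the embedded copy of $S/\pp$ in $\omega_M$ is unbounded above in $y$-degree since some $y_k\notin\pp$, and duality finishes. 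You may drop the hedge ``I expect to deduce this''.

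One small caution: the finite generation of $H^q_Q(M)_j$ over $K[x]$ is true and is what makes $\dim_{K[x]}H^q_Q(M)_j$ equal to the top nonvanishing $H^i_{P_0}$, but it is not literally ``visible from the \v{C}ech complex''---the individual terms $(M_{y_I})_j$ are typically not finitely generated over $K[x]$; one needs graded local duality or a base-change/spectral-sequence argument instead.
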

\begin{proof}
For the proof (a), we consider the spectral sequence $H^i_P\big (H^k_{Q}(M)\big )_{(*,j)}\underset{i} \Longrightarrow H^{i+k}_{\mm}(M)_{(*,j)}$. Observe that $H^i_P\big (H^k_{Q}(M)\big )_{(*,j)} = H^i_{P_0}\big (H^k_{Q}(M)_{(*,j)}\big)$ where $P_0$ is the graded maximal ideal of $K[x]$. This equality follows from the definition of local cohomology using the \v{C}ech complex. Note that  $H^k_{Q}(M)_j=0 $ for all $k<\cd(Q, M)=q$ and $j\ll 0$.  Thus the spectral sequence degenerates and one obtains  for all $i$ and $j \ll 0$ the following isomorphisms of bigraded $K[x]$-modules
 $H^i_{P_0}\big (H^q_{Q}(M)_{(*,j)}\big)\iso H^{i+q}_{\mm}(M)_{(*,j)}$. Since $ H^{p+q}_{\mm}(M)$ is a non zero Artinian $S$-module which is not finitely generated, it follows that $H^{p+q}_{\mm}(M)_j \neq 0$ for $j\ll 0$. Thus $H^p_{P_0}\big (H^q_{Q}(M)_{j}\big) \neq 0$ for $j\ll 0$ while $H^i_{P_0}\big (H^q_{Q}(M)_{j}\big)=0$ for $i>p$. Therefore $\dim_{K[x]} H^q_{Q}(M)_j=p$ for $j\ll 0$, as desired. Part (b) is proved in the same way.
\end{proof}

\begin{Corollary}
\label{artin}
Let $M$ be a finitely generated bigraded $S$-module with  $f_Q(M)=\cd(Q, M)=q$,   $p+q=\dim M$ and $|K|=\infty$. Then $H^q_{Q}(M)$ is an Artinian $S$-module if and only if $q =\dim(M)$.
\end{Corollary}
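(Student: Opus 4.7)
The plan is to prove the two implications separately, leveraging the dimension formula from Proposition~\ref{finitedim}(a) together with a slicing argument for Artinian bigraded modules.

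For the implication $q=\dim M \Rightarrow H^q_Q(M)$ Artinian, the hypothesis $p+q=\dim M$ forces $p=\cd(P,M)=0$. As in the proof of Proposition~\ref{ab}(a), this together with graded Nakayama forces $M=H^0_P(M)$, so the spectral sequence
\[
H^i_Q\bigl(H^j_P(M)\bigr) \underset{i}{\Longrightarrow} H^{i+j}_{\mm}(M)
\]
collapses and yields bigraded isomorphisms $H^i_Q(M)\iso H^i_{\mm}(M)$ for every $i$. In particular $H^q_Q(M)\iso H^q_{\mm}(M)$, which is Artinian by the standard theory of graded local cohomology at the irrelevant maximal ideal.

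For the converse I would proceed by contrapositive. Suppose $p>0$. By Proposition~\ref{finitedim}(a), $\dim_{K[x]} H^q_Q(M)_j = p > 0$ for $j \ll 0$, so such a slice cannot be Artinian as a graded $K[x]$-module, since Artinian modules have Krull dimension zero. It then suffices to verify the following auxiliary fact: if a bigraded $S$-module $N$ is Artinian, then each $y$-degree slice $N_j$ is Artinian as a graded $K[x]$-module. To see this, given a strictly descending chain $L_1 \supsetneq L_2 \supsetneq \cdots$ of graded $K[x]$-submodules of $N_j$, I would extend it to the chain $L_k S$ of bigraded $S$-submodules of $N$; since $y^\alpha L_k$ sits in $N_{j+|\alpha|}$, one checks that $(L_k S) \cap N_j = L_k$, so the extended chain remains strict and contradicts the DCC on $N$.

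The main obstacle is precisely this slicing lemma: Artinianness of a bigraded $S$-module does not on the face of it restrict to Artinianness of a graded $K[x]$-slice, but the extension-by-$S$-multiplication trick pins it down. Once this is established, combining with the dimension formula in Proposition~\ref{finitedim}(a) immediately forces $p=0$ and hence $q=\dim M$, closing the argument.
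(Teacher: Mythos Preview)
Your proof is correct and follows essentially the same route as the paper. For the direction Artinian $\Rightarrow q=\dim M$, both you and the paper invoke Proposition~\ref{finitedim}(a) to get $\dim_{K[x]}H^q_Q(M)_j=p$ for $j\ll 0$ and then use that an Artinian bigraded $S$-module has Artinian $K[x]$-slices to force $p=0$; the paper simply asserts this slicing fact while you supply the extension-and-intersect argument, which is fine. For the converse the paper just says ``well known,'' whereas you spell it out via $\cd(P,M)=0\Rightarrow M=H^0_P(M)$ and the collapsing spectral sequence---this is exactly the mechanism behind Proposition~\ref{ab}(a), so again no real divergence.
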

\begin{proof}
Assume that $H^q_{Q}(M)$ is an Artinian $S$-module. Then,  one has that  $H^q_{Q}(M)_j$ is an Artinian $K[x]$-module for all $j$. Therefore  $\dim_{K[x]}H^q_{Q}(M)_j=0$ for all $j$. On the other hand, in view of Proposition \ref{finitedim}(a) we have that $\dim_{K[x]}H^q_{Q}(M)_j=\cd(P, M)$ for $j\ll 0$. Thus, we deduce that   $\cd(P, M)=0$ and hence by Theorem \ref{main1} we have  $q=\dim M$. The converse is a well known fact.
\end{proof}
\begin{Corollary}
\label{artcm}
Let $M$ be a finitely generated bigraded $S$-module with $|K|=\infty$. The following statements hold:
\begin{itemize}
\item[{(a)}] if $M$ is relative Cohen--Macaulay with respect to $Q$, then  $\dim_{K[x]} H^q_{Q}(M)_j=p$ for $j\ll 0$. Moreover, $H^q_{Q}(M)$ is an Artinian $S$-module if and only if $q =\dim(M)$.
\item [(b)] if $M$ is relative Cohen--Macaulay with respect to $P$, then  $\dim_{K[y]} H^p_{P}(M)_j=q$ for $j\ll 0$. Moreover, $H^p_{P}(M)$ is an Artinian $S$-module if and only if $p =\dim(M)$.
\end{itemize}
\end{Corollary}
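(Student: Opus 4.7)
The plan is to derive Corollary \ref{artcm} as a direct consequence of Proposition \ref{finitedim} and Corollary \ref{artin}, combined with Theorem \ref{main1}. I spell out part (a); part (b) is proved by the symmetric argument after exchanging the roles of $P$ and $Q$.

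Assume $M$ is relative Cohen--Macaulay with respect to $Q$, so that $H^i_Q(M)=0$ for all $i\neq q$. Theorem \ref{main1} supplies $p+q=\dim M$, which is one of the hypotheses shared by Proposition \ref{finitedim}(a) and Corollary \ref{artin}. The remaining hypothesis to verify is $f_Q(M)=\cd(Q,M)=q$. The vanishing $H^i_Q(M)=0$ for $i<q$ makes those modules trivially finitely generated, so $f_Q(M)\geq q$. For the reverse inequality, in the nontrivial range $q>0$ the top non-vanishing local cohomology $H^q_Q(M)$ fails to be finitely generated, which forces $f_Q(M)=q$. The degenerate case $q=0$ is handled directly: then $H^0_Q(M)=M$ and $p=\dim M$ by Theorem \ref{main1}, and both assertions of (a) reduce to the elementary statement that a finitely generated $S$-module is Artinian if and only if its Krull dimension is zero.

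Once these hypotheses are in place, the first assertion of (a), $\dim_{K[x]} H^q_Q(M)_j=p$ for $j\ll 0$, is exactly Proposition \ref{finitedim}(a), and the Artinianness characterization is exactly Corollary \ref{artin}. Part (b) is obtained verbatim after swapping $P\leftrightarrow Q$, $p\leftrightarrow q$, and $K[x]\leftrightarrow K[y]$.

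The only substantive step that requires care is the non-finite generation of $H^q_Q(M)$ when $q>0$, which is what pins down $f_Q(M)=q$. This is a standard fact from local cohomology theory, and in our bigraded setting it can also be extracted from Proposition \ref{dim}: that result gives $\dim_S H^q_Q(M)=p$, so combined with the $Q$-torsion structure of $H^q_Q(M)$ one rules out finite generation in the relevant cases, while the case $p=0$ is handled by Theorem \ref{main1} (which forces $q=\dim M$ and directly delivers the Artinianness conclusion).
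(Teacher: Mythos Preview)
Your argument is correct and follows the same route as the paper, which simply cites Proposition~\ref{finitedim}, Theorem~\ref{main1}, and Corollary~\ref{artin}; you are in fact more careful, since you explicitly verify the hypothesis $f_Q(M)=q$ that those results require, whereas the paper leaves this implicit. One small caveat: your alternative justification in the final paragraph, deducing non-finite generation of $H^q_Q(M)$ from Proposition~\ref{dim} together with the $Q$-torsion property, is not a complete argument on its own (a finitely generated $Q$-torsion $S$-module can certainly have positive Krull dimension), but this does not matter since your primary appeal to the standard fact that $H^q_Q(M)$ is not finitely generated for $q>0$ is valid and is precisely what the paper itself invokes at the opening of Section~3.
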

\begin{proof}
The assertion follows from  Proposition \ref{finitedim}, Theorem \ref{main1} and  Corollary \ref{artin}.
\end{proof}
Recall the $Q$-finiteness dimension $f_{\mm}^Q(M)$ of $M$ relative to $\mm$ by
\[
f_{\mm}^Q(M)=\inf \{i\in \NN_0:  Q \not \subseteq   \rad (0: H^i_{\mm}(M)) \}.
\]
In view of \cite [Proposition 2.3]{M}  one has
\[
f_{\mm}^Q(M)=\sup \{i\in \NN_0: H^k_{\mm}(M)_j= 0 \;  \text {for all}  \; k < i  \; \text {and all}\; j\ll 0 \; \}.
\]

\begin{Proposition}
\label{dim}
Let $M$ be a finitely generated bigraded $S$-module with $\cd(P,M)=p$, $\cd(Q,M)=q$ and  $p+q=\dim M$. The following statements hold:
\begin{itemize}
\item[{(a)}] if $M$ is generalized Cohen--Macaulay with $f_Q(M)=\cd(Q, M)=q$, then   $\depth_{K[x]} H^q_{Q}(M)_j=p$ for $j\ll 0$.
\item [(b)] if $M$ is generalized Cohen--Macaulay with $f_P(M)=\cd(P, M)=p$, then $\depth_{K[y]} H^p_{P}(M)_j=q$ for $j\ll 0$.
\end{itemize}
\end{Proposition}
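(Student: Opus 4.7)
The plan is to refine the spectral sequence argument already used in Proposition \ref{finitedim}, now incorporating the finite length assumption on the lower local cohomology coming from the generalized Cohen--Macaulay hypothesis. Specifically, for part (a), I will combine the isomorphism
\[
H^i_{P_0}\bigl(H^q_{Q}(M)_{(*,j)}\bigr) \iso H^{i+q}_{\mm}(M)_{(*,j)}
\]
(valid for all $i$ and $j\ll 0$, as established in the proof of Proposition \ref{finitedim}(a)) with the vanishing of $H^k_{\mm}(M)_j$ in low degrees $k<\dim M$.

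First, I would recall that since $M$ is generalized Cohen--Macaulay, $H^k_{\mm}(M)$ has finite length for every $k<\dim M=p+q$. Finite length modules over $S$ vanish in all but finitely many bidegrees, so $H^k_{\mm}(M)_j=0$ for $j\ll 0$ whenever $k<p+q$. Applying this with $k=i+q$ for $i<p$ yields $H^i_{P_0}\bigl(H^q_{Q}(M)_j\bigr)=0$ for $i<p$ and $j\ll 0$, which by definition gives $\depth_{K[x]} H^q_{Q}(M)_j\geq p$ for $j\ll 0$.

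Next, I would invoke Proposition \ref{finitedim}(a), under the hypothesis $f_Q(M)=\cd(Q,M)=q$ and $p+q=\dim M$, to conclude that $\dim_{K[x]} H^q_{Q}(M)_j=p$ for $j\ll 0$. Combining the depth lower bound with this dimension equality and the standard inequality $\depth\leq\dim$ produces $\depth_{K[x]} H^q_{Q}(M)_j=p$ for $j\ll 0$, establishing (a). Part (b) follows by the symmetric argument, interchanging the roles of $P$ and $Q$ and using the corresponding version of Proposition \ref{finitedim} together with the identification $H^i_Q(H^k_P(M))_{(i,*)}= H^i_{Q_0}(H^k_P(M)_{(i,*)})$, where $Q_0$ is the graded maximal ideal of $K[y]$.

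The only delicate point is ensuring the spectral sequence genuinely degenerates in each strip $\{(*,j): j\ll 0\}$, so that the isomorphism of $K[x]$-modules is valid; this is already carried out in the proof of Proposition \ref{finitedim}(a) and requires nothing new. Once that is in place, the generalized Cohen--Macaulay hypothesis feeds exactly the vanishing needed for the depth bound, and the conclusion is immediate.
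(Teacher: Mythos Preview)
Your argument is correct. The paper takes a shorter route: it observes that generalized Cohen--Macaulayness forces $f_{\mm}^Q(M)=\dim M=p+q$ (this is exactly your vanishing $H^k_{\mm}(M)_j=0$ for $k<p+q$ and $j\ll 0$, repackaged via the invariant $f_{\mm}^Q$ introduced just before the proposition), and then invokes \cite[Theorem~2.3]{HJZ}, which gives directly $\grade(P_0,H^q_Q(M)_j)=f_{\mm}^Q(M)-\cd(Q,M)=p$ for $j\ll 0$. Your proof is essentially the spectral-sequence computation that underlies that cited theorem, combined with the dimension equality from Proposition~\ref{finitedim}(a). The advantage of your route is that it is self-contained within the paper and makes transparent exactly where the generalized Cohen--Macaulay hypothesis enters; the paper's version is terser but depends on an external reference whose proof one would have to unpack to see the same mechanism.
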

\begin{proof}
 For the proof (a), since $M$ is generalized Cohen--Macaulay, it follows that $f_{\mm}^Q(M)=\dim(M)=p+q$.  By \cite[Theorem 2.3]{HJZ} we have $\grade(P_0, H^q_{Q}(M)_j)=f_{\mm}^Q(M)-\cd(Q, M)$ for $j \ll 0$. This yields the desired claim. Part (b) is proved in the same way.
\end{proof}
\begin{Corollary}
Let $M$ be a finitely generated bigraded generalized Cohen--Macaulay $S$-module with $f_Q(M)=\cd(Q, M)=q$ and $p+q=\dim M$. Then  $H^q_{Q}(M)_j$ is Cohen--Macaulay $K[x]$-module of dimension $p$  for $j\ll 0$ and $\projdim_{K[x]} H^q_{Q}(M)_j=n-p$ for $j\ll 0$ .
\end{Corollary}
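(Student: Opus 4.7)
The plan is to combine the two immediately preceding results and close with the Auslander--Buchsbaum formula. The hypotheses $f_Q(M)=\cd(Q,M)=q$ and $p+q=\dim M$ are exactly those of Proposition \ref{finitedim}(a), so that proposition supplies $\dim_{K[x]} H^q_Q(M)_j=p$ for $j\ll 0$. The additional assumption that $M$ is generalized Cohen--Macaulay puts us in the setting of the immediately preceding proposition, which furnishes $\depth_{K[x]} H^q_Q(M)_j=p$ for $j\ll 0$. Choosing $j$ small enough that both conclusions hold simultaneously, the finitely generated graded $K[x]$-module $H^q_Q(M)_j$ has depth equal to its Krull dimension $p$; by definition it is Cohen--Macaulay of dimension $p$.

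For the projective dimension claim I would invoke the graded Auslander--Buchsbaum formula over the polynomial ring $K[x]=K[x_1,\dots,x_m]$, which is regular of depth $m$. Applied to the finitely generated graded $K[x]$-module $H^q_Q(M)_j$ it gives
\[
\projdim_{K[x]} H^q_Q(M)_j + \depth_{K[x]} H^q_Q(M)_j \;=\; m,
\]
so combining with the depth equality $\depth_{K[x]} H^q_Q(M)_j = p$ from the previous step one obtains $\projdim_{K[x]} H^q_Q(M)_j = m-p$ for $j\ll 0$. (The ``$n-p$'' in the statement appears to be a typographical slip for $m-p$, since $K[x]$ involves the $m$ variables $x_1,\dots,x_m$.)

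The only subtlety is finite generation of $H^q_Q(M)_j$ over $K[x]$ for $j\ll 0$, which one needs in order for both the Cohen--Macaulay property and Auslander--Buchsbaum to apply. This is already implicit in the cited results through the bigraded identification $H^i_P(H^k_Q(M))_{(*,j)}=H^i_{P_0}(H^k_Q(M)_{(*,j)})$ used in the proof of Proposition \ref{finitedim}, together with the hypothesis $f_Q(M)=q$ which controls the finiteness behaviour of the strands. Once finite generation is secured, the rest is one line, so I do not foresee a genuine obstacle: the corollary is essentially a bookkeeping combination of Proposition \ref{finitedim}(a), the preceding depth proposition, and Auslander--Buchsbaum.
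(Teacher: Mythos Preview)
Your proposal is correct and matches the paper's intended argument: the corollary is stated without proof, being an immediate consequence of Proposition~\ref{finitedim}(a) (dimension), Proposition~\ref{dim}(a) (depth), and the Auslander--Buchsbaum formula over $K[x]$. Your observation that the exponent should be $m-p$ rather than $n-p$ is also correct, and the finite generation of $H^q_Q(M)_j$ over $K[x]$ is a standard fact for graded components of local cohomology with respect to the irrelevant ideal of a standard graded $K[x]$-algebra, so there is no hidden difficulty there.
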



\section{Finiteness properties of local cohomology of an hypersurface ring }
 This is a well-known fact that the top local cohomology modules are almost never finitely generated. Let $M$ be relative Cohen--Macaulay with respect to $Q$ with $\cd(Q, M)=q$. Thus $H^{q}_{Q}(M)$ is
not finitely generated for $q>0$. In Corollary \ref{artcm} we observed that $H^{q}_{Q}(M)$ is not
artinian as well, unless the ordinary known case $q=\dim M$. We consider the hypersurface ring $R=S/fS$
where  $f \in S$ is a bihomogeneous form of degree $(a, b)$. This ring has only two nonvanishing local
cohomology with respect to $P$ or $Q$ which is close to relative Cohen--Macaulay modules. In the following ,
we first observe that $H^{n-1}_{Q}(R)$ is not finitely generated, too for $n\geq 2$ and obtain some results on
 Artinianness of local cohomology of $R$.
\begin {Proposition}
Let $R=S/fS$ be a  hypersurface ring. Then $H^{n-1}_{Q}(R)$ is not finitely generated for $n\geq 2$.
\end{Proposition}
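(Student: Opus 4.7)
The plan is to apply the functor $H^\bullet_Q(-)$ to the short exact sequence
\[
0 \to S(-a,-b) \xrightarrow{\,f\,} S \to R \to 0.
\]
Because $H^i_Q(S)=0$ for every $i \neq n$, the resulting long exact sequence collapses to the four-term sequence
\[
0 \to H^{n-1}_Q(R) \to T(-a,-b) \xrightarrow{\,\cdot f\,} T \to H^n_Q(R) \to 0,
\]
where $T:=H^n_Q(S)$. Thus it suffices to show that $\ker\bigl(\cdot f : T(-a,-b) \to T\bigr)$ is not finitely generated as an $S$-module.

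Next I would describe $T$ explicitly via the \v{C}ech complex on $y_1,\dots,y_n$: one has a canonical identification $T\cong K[x_1,\dots,x_m]\otimes_K E$, where $E$ has $K$-basis the Laurent monomials $\{y^\beta : \beta_k\le -1 \text{ for all } k\}$, with $y_i$ acting by $y^\beta\mapsto y^{\beta+e_i}$ when $\beta_i\le -2$ and by $0$ when $\beta_i=-1$. In particular, for every integer $j\le -n$ the $y$-degree-$j$ bigraded slice $T_{(\ast,j)}$ is a free $K[x]$-module of rank $\binom{-j-1}{n-1}$.

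The argument then proceeds by contradiction. Suppose $H^{n-1}_Q(R)$ is finitely generated. Since $S$ lives in bidegrees $(i,j)$ with $i,j\ge 0$, the $y$-degrees of the elements of any finitely generated bigraded $S$-module are bounded below by the minimum $y$-degree over a finite set of homogeneous generators, so it suffices to produce nonzero elements of $H^{n-1}_Q(R)$ in arbitrarily low $y$-degree. For each integer $j\le -n$, restricting $\cdot f$ to the $y$-degree-$j$ slice of $T(-a,-b)$ gives a $K[x]$-linear map between the free $K[x]$-modules
\[
T_{(\ast,\,j-b)} \longrightarrow T_{(\ast,\,j)}
\]
of ranks $\binom{b-j-1}{n-1}$ and $\binom{-j-1}{n-1}$ respectively; since $n\ge 2$ and $b\ge 1$, the source rank strictly exceeds the target rank, and so this $K[x]$-linear map between free $K[x]$-modules of different ranks must have nonzero kernel. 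This produces a nonzero element of $H^{n-1}_Q(R)$ in $y$-degree $j$ for every $j\le -n$, and letting $j\to-\infty$ yields the desired contradiction.

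The crux of the proof lies in the final rank comparison, which is exactly where the hypothesis $n\ge 2$ is indispensable: for $n=1$ both binomial coefficients reduce to $1$, the kernel can (and for a generic $f$ does) vanish, and the conclusion indeed fails in that range.
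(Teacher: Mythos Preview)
Your argument is correct and reaches the same conclusion as the paper---namely, that $H^{n-1}_Q(R)$ has nonzero components in arbitrarily negative $y$-degree---but by a different route. The paper does not compute ranks at all: after writing down the same four-term exact sequence, it tensors over $K[x]$ with the fraction field $F=\operatorname{Frac}K[x]$ and invokes the graded flat base change isomorphism $F\otimes_{K[x]}H^i_Q(R)\cong H^i_{T_+}(T/fT)$ with $T=F[y_1,\dots,y_n]$. Since $T/fT$ is a graded $(n-1)$-dimensional quotient of a polynomial ring over a field, its top local cohomology $H^{n-1}_{T_+}(T/fT)$ is Artinian and not finitely generated, hence nonzero in all degrees $j\ll 0$; pulling back gives $H^{n-1}_Q(R)_j\neq 0$ for $j\ll 0$. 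Your rank comparison is essentially what one sees after performing this base change (your free $K[x]$-modules become $F$-vector spaces of the same ranks), but you carry it out directly and elementarily, without appealing to base change or to the black-box Artinianness of top local cohomology. The paper's route is shorter and conceptually cleaner; yours is more explicit and self-contained, and it pinpoints exactly where $n\geq 2$ enters.

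One small point worth flagging: you invoke $b\geq 1$ without comment. This is genuinely needed (if $b=0$ then $f\in K[x]$, $R\cong (K[x]/(f))\otimes_K K[y]$, and $H^{n-1}_Q(R)=0$), and the paper's proof requires it too, since otherwise $f$ becomes a unit in $T$ and $T/fT=0$. So this is an implicit standing assumption in the section rather than a defect particular to your argument.
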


\begin {proof}
\label{surface}
 The exact sequence  $0\rightarrow S(-a,-b)\stackrel f \rightarrow S\rightarrow S/fS\rightarrow 0$, induces the following exact sequence of $S$-modules
\[
0 \rightarrow  H_{Q}^{n-1}(R) \rightarrow H_{Q}^{n}(S)(-a, -b) \stackrel f \rightarrow  H_{Q}^{n}(S) \rightarrow H_{Q}^n(R) \rightarrow 0.
\]
Moreover,  $H^i_{Q}(R)= 0$ for all $i<n-1$.  Let $F$ be the quotient field of $K[x]$. Then
\[
F \tensor_{K[x]} S=F[y_1,\ldots,y_n]=:T.
\]
Let $T_+$ be the graded maximal ideal of $T$.  By the graded flat base change theorem, we have
\[
F \tensor_{K[x]}H_{Q}^i(R)\iso H_{T_+}^i(F\tensor_{K[x]} R )\quad \text {for all} \quad  i.
\]
Since $ F \tensor_{K[x]} R = T/fT$ and $\dim T/fT = n-1$, it follows that
\[
H_{T_+}^i(T/fT)=0  \quad \text {for all} \quad  i \neq n-1.
\]
Note that $H_{T_+}^{n-1}(T/fT)$ is an Artinian $T$-module which is not finitely generated. Thus  $H_{T_+}^{n-1}(T/fT)_j \neq 0$ for all $j\ll 0$ and $n\geq 2$, and hence $H_{Q}^{n-1}(R)_j \neq 0$ for all $j\ll 0$ and $n\geq 2$. Therefore $H_{Q}^{n-1}(R)$ is not finitely generated for $n\geq 2$, as desired.
\end {proof}
For bihomogeneous element $f \in S$, we denote by $c(f)$  the ideal of $K[x]$ generated by all the coefficients of $f$ and $P_0=(x_1, \dots, x_m)$ the graded maximal ideal of $K[x]$. A dual version of the above observation can be discussed as Artinianness of local cohomology of hypersurface rings.

\begin{Proposition}
Let $R=S/fS$ be a hypersurface ring. Then,
\begin{itemize}
\item[{(a)}]  if   $m\leq 1$, then $H_{Q}^{n}(R)$ is an Artinian $S$-module,
\item[(b)] let $m\geq 2$.  If $H_{Q}^{n}(R)$ is an Artinian $S$-module, then $c(f)$ is an $P_0$-primary ideal which does not form a system of parameters for $K[x]$.
\end{itemize}
\end{Proposition}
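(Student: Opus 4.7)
For part (a), I would argue case-by-case. If $m=0$ then $R=K[y]/(f)$ has Krull dimension $n-1$, so $H^n_Q(R)=0$ is trivially Artinian. If $m=1$, bihomogeneity of $f$ of bidegree $(a,b)$ forces $f=x_1^a h(y)$ with $h\in K[y]_b\setminus\{0\}$. Plugging this into the fundamental exact sequence
\[
H^n_Q(S)(-a,-b)\xrightarrow{\cdot f}H^n_Q(S)\to H^n_Q(R)\to 0
\]
and using the natural isomorphism $H^n_Q(S)\cong K[x_1]\otimes_K H^n_{(y)}(K[y])$, one observes that multiplication by the nonzero polynomial $h\in K[y]$ is surjective on the injective module $H^n_{(y)}(K[y])$ (dual to its injectivity on the domain $K[y]$). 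Thus the cokernel of $\cdot f=\cdot(x_1^a\otimes h)$ identifies with $(K[x_1]/(x_1^a))\otimes_K H^n_{(y)}(K[y])$, which is $\mm$-torsion with a one-dimensional socle at bidegree $(a-1,-n)$ and hence Artinian.

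For part (b), suppose $H^n_Q(R)$ is Artinian, so $\mm$-torsion and in particular $P_0$-torsion. I would first inspect the top $y$-graded piece: in $y$-degree $-n$, the target $H^n_Q(S)_{*,-n}=K[x]\cdot y^{-\mathbf{1}}$ (where $y^{-\mathbf{1}}=(y_1\cdots y_n)^{-1}$) is a rank-one free $K[x]$-module, and a direct computation shows that a basis element $y^K$ of $H^n_Q(S)_{*,-n-b}$ (with $K<0$, $|K|=-n-b$) is sent by $\cdot f$ to $g_{-K-\mathbf{1}}\cdot y^{-\mathbf{1}}$. As $K$ varies, $-K-\mathbf{1}$ ranges over all $J\in\NN^n$ with $|J|=b$, so the image is $c(f)\cdot y^{-\mathbf{1}}$ and $H^n_Q(R)_{*,-n}\cong K[x]/c(f)$. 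This quotient inherits $P_0$-torsion, hence is a finite-dimensional $K$-algebra, forcing $c(f)$ to be $P_0$-primary.

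For the last assertion, suppose for contradiction that $c(f)=(h_1,\ldots,h_m)$ with $h_1,\ldots,h_m$ a system of parameters for $K[x]$, hence a regular sequence in the Cohen--Macaulay ring $K[x]$. Since $c(f)$ is generated in the single degree $a$ by the $g_J$'s, one has $\mu(c(f))=\dim_K c(f)_a\leq m$, so the $h_i$ may be chosen inside $c(f)_a$, which yields a presentation $f=\sum_{i=1}^m h_i p_i$ with $p_i\in K[y]_b$. The plan is then to exhibit infinitely many $S$-module generators of the bigraded Matlis dual
\[
(H^n_Q(R))^{\vee}=\ker\bigl(\cdot f:(H^n_Q(S))^{\vee}\to (H^n_Q(S))^{\vee}(a,b)\bigr),
\]
contradicting the finite generation equivalent to Artinianness of $H^n_Q(R)$. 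Identifying $(H^n_Q(S))^{\vee}\cong{}^*K[x]\otimes_K K[y](-n)$ where ${}^*K[x]$ is the graded Matlis dual of $K[x]$ with dual-basis $\xi^\alpha$ satisfying $x_j\xi^\alpha=\xi^{\alpha-e_j}$, for each $k\ge 1$ I would construct a Koszul-alternating element $\eta_k$ built from the $\xi^\alpha$ (with $|\alpha|=k$) paired with degree-$kb$ products of the $p_i$'s, verify $f\cdot\eta_k=0$ via the standard Koszul cancellation on the regular sequence $(h_1,\ldots,h_m)$, and observe that the $x$-bidegree of $\eta_k$ equals $-k$, so $\eta_k\notin S\cdot\langle\eta_1,\ldots,\eta_{k-1}\rangle$ (since $S$-multiplication only raises bidegrees). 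The main obstacle will be writing the $\eta_k$'s explicitly with appropriate signs so that the Koszul cancellation is transparent in general; the prototype $f=x_1y_1+x_2y_2$ (with $h_i=x_i$, $p_i=y_i$, so that $\eta_k=\sum_{|\alpha|=k}\mathrm{sgn}(\alpha)\,\xi^\alpha\otimes y^{k\mathbf{1}-\alpha}$ for suitable signs) makes the combinatorial pattern clear, while the general case requires careful divided-power bookkeeping.
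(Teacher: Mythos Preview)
For part (a) and for the first conclusion in (b), your arguments are correct and in fact more self-contained than the paper's. In (a) with $m=1$ the paper cites a cofiniteness theorem of Delfino--Marley together with a support description from Marley--Vassilev, whereas your direct identification of the cokernel as $(K[x_1]/(x_1^a))\otimes_K H^n_{(y)}(K[y])$ is shorter and avoids external input. Similarly, for the $P_0$-primariness of $c(f)$ the paper invokes \cite[Lemma~2.5]{MV} to describe $\Supp H^n_Q(R)$, while your explicit computation $H^n_Q(R)_{*,-n}\cong K[x]/c(f)$ reaches the same conclusion by elementary means.

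For the final clause of (b) the two approaches diverge sharply: the paper simply cites \cite[Theorem~2.3]{MV}, which asserts directly that when $c(f)$ is generated by a system of parameters the socle $\Hom(S/\mm,H^n_Q(R))$ is infinite-dimensional. Your Matlis-dual strategy is sound in principle but, as you yourself flag, the construction of the $\eta_k$ is only carried out in the prototype $h_i=x_i$, $p_i=y_i$. The missing ingredient for the general case is the following: since $h_1,\dots,h_m$ is a homogeneous system of parameters in the Cohen--Macaulay ring $K[x]$, the ring $K[x]$ is finite free over the polynomial subring $K[h_1,\dots,h_m]$, and hence ${}^*K[x]$ decomposes as a $K[h]$-module into finitely many shifted copies of the injective hull $E_{K[h]}(K)$. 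Inside any one copy you then have honest divided-power elements $\zeta^{(i_1,\dots,i_m)}$ with $h_l\cdot\zeta^{(i_1,\dots,i_m)}=\zeta^{(i_1,\dots,i_l-1,\dots,i_m)}$, and the two-variable telescoping sum $\eta_k=\sum_{j=0}^k(-1)^j\zeta^{(k-j,j,0,\dots,0)}\otimes p_1^{\,j}p_2^{\,k-j}$ is annihilated by $f$ exactly as in your prototype (the summands $h_lp_l$ with $l\ge3$ kill each $\zeta^{(\ast,\ast,0,\dots,0)}$ outright). Without isolating this structural step, the passage from the prototype to arbitrary $h_i$ remains a genuine gap; once it is supplied, your $x$-degree argument (the $\eta_k$ sit in strictly decreasing $x$-degrees, so no finite set of them can $S$-generate the rest) completes the proof.
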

\begin{proof}

For the proof (a) if $m=0$, then $Q$ is the graded maximal ideal of $K[y]$ and we may write $f= \sum_
{\left|\beta\right|=b}c_{\beta}  y^\beta $ where $ c_{\beta}\in K$. Hence $R$ is  Cohen-Macaulay of dimension $n-1$ and so $H_{Q}^{n}(R)=0$ is Artinian. Let  $m=1$,   we need to show  $\Supp H_{Q}^{n}(R)\subseteq \{ \mm\}$ and $\Hom(S/{\mm}, H_{Q}^{n}(R))$ is a finitely generated $S$-module where $\mm=P+Q$ is the unique graded maximal ideal of $S$. As  $m=1$, we may write  $f=x^a \sum_
{\left|\beta\right|=b}c_{\beta}  y^\beta $ where $ c_{\beta}\in K$. Thus $c(f)=(x^a)$ is an $(x)$-primary ideal and hence by \cite[Corollary 2.6]{MV} we have  $\Supp H_{Q}^{n}(R)= \{ \mm\}$. Since $m=1$, by \cite [Theorem 1]{DT}  $H_{Q}^{n}(R)$ is $Q$-cofinite and so $\Hom(S/Q, H_{Q}^{n}(R))$ is finitely generated. Therefore $\Hom(S/{\mm}, H_{Q}^{n}(R))$ is finitely generated.

For the proof (b), as  $H_{Q}^{n}(R)$ is an Artinian $S$-module, we have $\Supp H_{Q}^{n}(R)\subseteq \{ \mm\}$. By \cite [Lemma 2.5]{MV} we have $\Supp H_{Q}^{n}(R)=\{ \qq \in \Spec S: c(f)+Q \subseteq \qq \}$. Thus the maximal ideal $\mm$ is the only minimal prime ideal $c(f)+Q$. It follows that $P_0$ is the only minimal prime ideal $c(f)$. Therefore $c(f)$ is an $P_0$-primary ideal. Since $\Hom(S/{\mm}, H_{Q}^{n}(R))$ is finitely generated, by \cite[Theorem 2.3]{MV},  $c(f)$ does not form a system of parameters for $K[x]$.
\end{proof}
In the following, we show that $H_{Q}^{n-1}(R)$ is Artinian if and only if  $Q$ is the graded maximal ideal and $\dim R=n-1$.
\begin{Proposition}
Let $R=S/fS$ be a hypersurface ring and $n\geq 1$. Then $H_{Q}^{n-1}(R)$ is an Artinian $S$-module if and only if $m=0$.
\end{Proposition}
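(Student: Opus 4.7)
The plan is to prove the equivalence by handling the two directions separately, with the nontrivial implication modeled on the flat base change that drove the previous Proposition. The ``if'' direction is the easy one: when $m=0$ we have $S=K[y_1,\dots,y_n]$ and $Q$ is its graded maximal ideal, so $R=S/fS$ is a standard graded ring whose local cohomology with respect to $Q$ is just its ordinary graded local cohomology. Using that $f$ is bihomogeneous of positive $y$-degree, $R$ has Krull dimension $n-1$, so $H_Q^{n-1}(R)$ is the top local cohomology of $R$ with respect to its own graded maximal ideal, and it is Artinian by the classical Grothendieck result.

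For the ``only if'' direction I would argue by contradiction: suppose $m\geq 1$ and that $H_Q^{n-1}(R)$ is Artinian as an $S$-module. Every Artinian $S$-module is supported only at the irrelevant maximal ideal $\mm=P+Q$, so each of its elements is killed by a power of $\mm$, and hence by a power of $P_0=(x_1,\dots,x_m)$. Since $m\geq 1$, any nonzero element of $P_0$ is a nonzero element of $K[x]$, so every element is annihilated by a nonzero element of $K[x]$. Consequently, inverting $K[x]\setminus\{0\}$ kills the module, that is,
\[
F\otimes_{K[x]}H_Q^{n-1}(R)=0,
\]
where $F$ denotes the quotient field of $K[x]$. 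Now I invoke the same graded flat base change as in the proof of Proposition 3.1: with $T=F\otimes_{K[x]}S=F[y_1,\dots,y_n]$ and $T_+$ its graded maximal ideal, one has $F\otimes_{K[x]}H_Q^{n-1}(R)\cong H_{T_+}^{n-1}(T/fT)$. Because $f$ has positive $y$-degree, it remains a nonzero non-unit in $T$, so $T/fT$ has Krull dimension $n-1$, and its top local cohomology $H_{T_+}^{n-1}(T/fT)$ is nonzero. This contradicts the above vanishing, forcing $m=0$.

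The step I expect to require the most care is verifying the base-change computation \emph{uniformly} across all $n\geq 1$, because the case $n=1$ is a bit degenerate: there $T/fT=F[y_1]/(y_1^b)$ is a finite-dimensional $F$-vector space rather than a genuinely ``top local cohomology'' in the usual sense, but it is still $T_+$-torsion and nonzero, so $H_{T_+}^0(T/fT)=T/fT\neq 0$ and the same contradiction goes through. The other routine check is that the implicit standing hypothesis of the section, namely that $f$ is a bihomogeneous form of degree $(a,b)$ with $a,b>0$, is what guarantees both that $f$ is a nonzero non-unit in $T$ and that $T/fT$ has the claimed dimension $n-1$; without $b>0$ one would have $T/fT=0$ and the argument would collapse.
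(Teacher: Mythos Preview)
Your argument is correct, but it takes a different route from the paper's. The paper does not use flat base change here at all: instead it reads off from the long exact sequence that $H_Q^{n-1}(R)\cong (0:_{H_Q^n(S)}f)$, observes that (since $b>0$ forces $f\in Q$) this submodule contains $(0:_{H_Q^n(S)}Q)$, and then applies Melkersson's criterion \cite[Theorem~7.1.2]{BS} to the $Q$-torsion module $H_Q^n(S)$ to conclude that $H_Q^{n-1}(R)$ is Artinian if and only if $H_Q^n(S)$ is. The latter is then handled by Corollary~\ref{artcm} applied to $M=S$. So the paper leverages the structural identification with $H_Q^n(S)$ and feeds back into Section~2, whereas you recycle the base-change trick from Proposition~3.1 to produce a direct nonvanishing obstruction; your route is more self-contained and avoids the appeal to Corollary~\ref{artcm}, while the paper's route ties the three propositions of the section into a single picture via $H_Q^n(S)$.

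Two small remarks on your write-up. First, the sentence ``Every Artinian $S$-module is supported only at $\mm$'' is false as stated for ungraded modules; what you are really using is that $H_Q^{n-1}(R)$ is \emph{bigraded} and Artinian, which forces each homogeneous element to have $\mm$-primary annihilator. Second, your ``standing hypothesis $a,b>0$'' cannot quite be uniform, since the case $m=0$ forces $a=0$; what both your proof and the paper's genuinely need is $b>0$ (so that $f\in Q$ for the paper, and $f$ remains a non-unit in $T$ for you), and you already isolate this correctly when you discuss the $n=1$ case.
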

\begin{proof}
The exact sequence  $0\rightarrow S(-a,-b)\stackrel f \rightarrow S\rightarrow S/fS\rightarrow 0$,
induces the following exact sequence of $S$-modules
\[
0 \rightarrow  H_{Q}^{n-1}(R) \rightarrow H_{Q}^{n}(S)(-a, -b) \stackrel f \rightarrow  H_{Q}^{n}(S) \rightarrow H_{Q}^n(R) \rightarrow 0.
\]
Note that $H_{Q}^{n-1}(R)=0 \underset {H_{Q}^{n}(S)} :f \supseteq 0 \underset {H_{Q}^{n}(S)} :Q$ and  $H_{Q}^{n}(S)$ is an $Q$-torsion $S$-module. By \cite[Theorem 7.1.2]{BS} we have $H_{Q}^{n-1}(R)$ is an Artinian $S$-module if and only if $H_{Q}^{n}(S)$ is an Artinian $S$-module. Hence by Corollary \ref{artcm} this is equivalent to saying that $m=0$.
\end{proof}
\section{generalization of Reisner's criterion for Cohen--Macaulay simplicial complexes}

As before, let $S=K[x_1, \dots, x_m, y_1, \dots, y_n]$ be the standard bigraded polynomial ring in $n+m$ variables over a field $K$ and $\Delta$ a simplicial complex on $[n+m]$. We assume that $\Delta$ has vertices $\{v_1, \dots, v_m, w_1, \dots, w_n\}$ where vertices $V=\{v_1, \dots, v_m\} $ and $W=\{w_1, \dots, w_n\}$ correspond to the variables of $x_1, \dots, x_m$ and $y_1, \dots,  y_n$, respectively. We denote by $\Delta_W$ the restriction of $\Delta$ on $W$ which is the subcomplex
\[
\Delta_W=\{ F \in \Delta: F \subseteq W \}.
\]
Let $F$ be a facet simplicial complex of $\Delta$ on $[n+m]$.  We denote by $\pp_F$ the prime ideal generated by all $x_i$ and $y_j$ such that $v_i, w_j \not \in F$.
\begin{Proposition}
\label{facet}
 Let $\Delta$ be a simplicial complex on $[n+m]$ and  $K[\Delta]=S/I_{\Delta}$ the Stanley-Reisner ring of  $\Delta$. Then
\[
\cd(Q, K[\Delta])=\dim \Delta_W+1.
\]
\end{Proposition}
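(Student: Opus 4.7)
The plan is to reduce the computation to formula~(\ref{3}), namely $\cd(Q,M)=\dim M/PM$, applied to $M=K[\Delta]$. This immediately gives
\[
\cd(Q,K[\Delta])=\dim S/(I_{\Delta}+P).
\]
So the whole proof reduces to identifying the right-hand side with $\dim K[\Delta_W]$ and then invoking the classical formula $\dim K[\Delta_W]=\dim\Delta_W+1$.

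The key intermediate step I would establish is the ring isomorphism
\[
S/(I_{\Delta}+P)\;\cong\;K[y_1,\dots,y_n]/I_{\Delta_W},
\]
where on the right $I_{\Delta_W}$ denotes the Stanley--Reisner ideal of $\Delta_W$ regarded as a simplicial complex on the vertex set $W=\{w_1,\dots,w_n\}$. To see this, I note that the minimal monomial generators of $I_\Delta$ are the squarefree monomials $x^{F_V}y^{F_W}$ corresponding to the minimal non-faces $F=F_V\cup F_W$ of $\Delta$. Reducing modulo $P=(x_1,\dots,x_m)$ kills every such generator for which $F_V\neq\emptyset$ and leaves exactly the monomials $y^{F}$ with $F\subseteq W$ a non-face of $\Delta$. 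Since $\Delta_W=\{F\in\Delta:F\subseteq W\}$, the non-faces of $\Delta$ contained in $W$ are precisely the non-faces of $\Delta_W$, so the surviving generators are exactly those of $I_{\Delta_W}$ inside $K[y_1,\dots,y_n]$.

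Having established the isomorphism, I would conclude using the standard equality $\dim K[\Delta_W]=\dim\Delta_W+1$ to obtain
\[
\cd(Q,K[\Delta])=\dim K[\Delta_W]=\dim\Delta_W+1,
\]
as claimed. (Alternatively, one could argue via (\ref{2}): the minimal primes of $K[\Delta]$ are the $\pp_F$ associated with facets $F$ of $\Delta$, and $\cd(Q,S/\pp_F)=\dim S/(\pp_F+P)=|F\cap W|$; maximizing over facets recovers $\dim\Delta_W+1$, since every face of $\Delta_W$ extends to a facet of $\Delta$.)

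The proof is essentially a direct computation and I do not anticipate a substantive obstacle; the only point meriting care is the verification of the ring isomorphism, and even there the argument collapses to the tautology that a subset of $W$ fails to be a face of $\Delta_W$ if and only if it fails to be a face of $\Delta$.
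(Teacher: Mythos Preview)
Your argument is correct. The main route you take differs from the paper's: you apply the identity $\cd(Q,M)=\dim M/PM$ directly to $M=K[\Delta]$ and then identify the quotient ring $S/(I_\Delta+P)$ with the Stanley--Reisner ring $K[\Delta_W]$, reducing to the classical formula $\dim K[\Delta_W]=\dim\Delta_W+1$. The paper instead passes through the minimal primes first, using $\cd(Q,K[\Delta])=\max\{\cd(Q,S/\pp_F):F\text{ a facet of }\Delta\}$ and then computing $\cd(Q,S/\pp_F)=\dim S/(P+\pp_F)=|F\cap W|$ for each facet, so that the maximum is $\dim\Delta_W+1$. Your approach is slightly more direct and conceptual (one global ring isomorphism rather than a facet-by-facet computation), while the paper's makes the role of the facets of $\Delta_W$ explicit; your parenthetical alternative is essentially the paper's proof. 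The verification of the isomorphism $S/(I_\Delta+P)\cong K[y]/I_{\Delta_W}$ is exactly as you say: minimal non-faces of $\Delta_W$ are precisely the minimal non-faces of $\Delta$ contained in $W$.
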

\begin{proof}
Using primary decomposition of $I_{\Delta}=\bigcap_F \pp_F$ where the intersection is taken over all facets of $\Delta$, together with (1) and  (3)  we have
\begin{eqnarray*}
\cd(Q, K[\Delta]) & = & \max \{\cd(Q, S/{\qq}): \qq \in \Min (K[\Delta])\} \\ & = &
 \max \{\cd(Q, S/{\pp_{F}}):  F \; \text{is a facet of} \; \Delta   \}
\\ & = &
\max \{\dim  S/(P+{\pp_{F}}):   F\; \text{ is a facet of} \; \Delta   \}\\ & = &
\max \{\dim  K[y_1, \dots, y_n]/{\pp_{G}}:   G\; \text{ is a facet of} \; \Delta_W   \}\\ & = &
\max \{ |G|: G  \; \text {is a facet of} \; \Delta_W \}\\ & = &
\dim \Delta_W+1,
\end{eqnarray*}
as required.
\end{proof}
\begin{Corollary}
Let $\Delta$ be a simplicial complex on $[n]$,  then one has $\dim K[\Delta]=\dim \Delta +1.$
\end{Corollary}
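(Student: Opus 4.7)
The plan is to derive this corollary directly by specializing Proposition~\ref{facet} to the degenerate case in which there are no $x$-variables, i.e.\ $m=0$. Under this specialization the bigraded ring $S$ reduces to the singly-graded polynomial ring $K[y_1,\dots,y_n]$, the ideal $P$ becomes the zero ideal, and $Q=(y_1,\dots,y_n)$ coincides with the unique graded maximal ideal $\mathfrak m$ of $S$. Since the vertex set $V$ is empty, we have $W=[n]$, and therefore the restricted complex $\Delta_W$ of Proposition~\ref{facet} is simply $\Delta$ itself.

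Next I would invoke the standard Grothendieck vanishing/non-vanishing theorem: for a finitely generated graded module $M$ over a standard graded polynomial ring with graded maximal ideal $\mathfrak m$, one has $H^i_{\mathfrak m}(M)=0$ for $i>\dim M$ and $H^{\dim M}_{\mathfrak m}(M)\neq 0$. Consequently $\cd(\mathfrak m, M)=\dim M$. Applied to $M=K[\Delta]$ with $\mathfrak m=Q$, this gives $\cd(Q,K[\Delta])=\dim K[\Delta]$.

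Combining these two observations with Proposition~\ref{facet} yields the chain
\[
\dim K[\Delta] \;=\; \cd(Q,K[\Delta]) \;=\; \dim \Delta_W + 1 \;=\; \dim \Delta + 1,
\]
which is the desired equality.

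There is essentially no obstacle here, as the corollary is a clean specialization. The only point worth checking is that the proof of Proposition~\ref{facet} really goes through when $m=0$: the argument there only manipulates facets $F$ of $\Delta$ and their images in $W$ via the primary decomposition of $I_\Delta$, and never requires the presence of $x$-variables, so the derivation remains valid in this boundary case.
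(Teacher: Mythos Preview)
Your proposal is correct and matches the paper's intended derivation: the corollary is stated in the paper without proof, as an immediate specialization of Proposition~\ref{facet} to the case $m=0$, where $Q$ becomes the graded maximal ideal, $\Delta_W=\Delta$, and $\cd(Q,K[\Delta])=\dim K[\Delta]$. You have simply made explicit the details the paper leaves implicit.
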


We say that $\Delta$ is relative Cohen--Macaulay with respect to $Q$ over $K$ if $K[\Delta]$ is relative Cohen--Macaulay with respect to $Q$. We say that a simplicial complex $\Delta$ is pure if all facets have the same cardinality.
\begin{Corollary}
\label{pure}
Let $\Delta$ is relative Cohen--Macaulay with respect to $Q$, then $\Delta_W$ is a pure simplicial complex.
\end{Corollary}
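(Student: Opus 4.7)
The plan is to combine the relative unmixedness implication of Corollary~\ref{relative} with the explicit description of the minimal primes of $I_\Delta$ and the computation already carried out inside the proof of Proposition~\ref{facet}.

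First, since $K[\Delta]$ is relative Cohen--Macaulay with respect to $Q$, Corollary~\ref{relative} tells us that $K[\Delta]$ is relative unmixed with respect to $Q$; hence
\[
\cd(Q, S/\pp) = \cd(Q, K[\Delta]) \quad \text{for every } \pp \in \Ass(K[\Delta]).
\]
From the Stanley--Reisner primary decomposition $I_\Delta = \bigcap_{F}\pp_F$, taken over all facets $F$ of $\Delta$, the associated primes of $K[\Delta]$ are exactly the $\pp_F$, and the decomposition is irredundant.

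Next, as in the chain of equalities inside the proof of Proposition~\ref{facet}, one has
\[
\cd(Q, S/\pp_F) = \dim S/(P+\pp_F) = |F \cap W|
\]
for every facet $F$ of $\Delta$, since $S/(P+\pp_F) \cong K[y_j : w_j \in F]$. Combined with the previous step, this forces the integer $|F \cap W|$ to be independent of the facet $F$ of $\Delta$, with common value $\cd(Q, K[\Delta]) = \dim \Delta_W + 1$.

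Finally, I would show that every facet of $\Delta_W$ has exactly this cardinality. Given a facet $G$ of $\Delta_W$, the set $G$ is a face of $\Delta$, so extend it to a facet $F$ of $\Delta$; then $G \subseteq F \cap W$, and $F \cap W$ is itself a face of $\Delta_W$, so the maximality of $G$ in $\Delta_W$ forces $F \cap W = G$. Therefore $|G| = |F \cap W| = \dim \Delta_W + 1$, so all facets of $\Delta_W$ share a common cardinality, i.e.\ $\Delta_W$ is pure. No serious obstacle is anticipated; the only point that deserves care is the final observation that every facet of $\Delta_W$ arises as the trace on $W$ of some facet of $\Delta$, which is exactly the extension-plus-maximality argument just given.
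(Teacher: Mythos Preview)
Your proof is correct and follows essentially the same route as the paper's, which simply cites Corollary~\ref{relative} and Proposition~\ref{facet} and calls the result immediate. You have merely unpacked what ``immediate'' means: relative unmixedness forces $\cd(Q,S/\pp_F)=|F\cap W|$ to be constant over the facets $F$ of $\Delta$, and every facet of $\Delta_W$ arises as such an $F\cap W$.
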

\begin{proof}
The assertion is immediate from Corollary \ref{relative} and Proposition \ref{facet}.
\end{proof}
\begin{Corollary}
\label{dim0}
Let $\dim \Delta_W=0$, then $\Delta$ is relative Cohen--Macaulay with respect to $Q$.
\end{Corollary}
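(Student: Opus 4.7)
The plan is to verify relative Cohen--Macaulayness directly by showing that $\grade(Q,K[\Delta])$ and $\cd(Q,K[\Delta])$ coincide, both equaling $1$. The cohomological dimension is immediate: Proposition~\ref{facet} gives $\cd(Q,K[\Delta]) = \dim\Delta_W + 1 = 1$, so it remains only to prove $\grade(Q,K[\Delta]) \geq 1$, since the reverse inequality $\grade \leq \cd$ is always available.

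For the grade lower bound, I would exploit the fact that $I_\Delta = \bigcap_F \pp_F$ (intersection over the facets $F$ of $\Delta$) is a squarefree monomial ideal, so that
\[
\Ass_S(K[\Delta]) \;=\; \{\pp_F : F \text{ is a facet of } \Delta\}.
\]
Since $\grade(Q, K[\Delta]) \geq 1$ is equivalent to $Q$ not being contained in any associated prime of $K[\Delta]$, and since $Q \subseteq \pp_F$ if and only if no $w_j$ lies in $F$, the task reduces to showing that every facet of $\Delta$ meets $W$. Under the hypothesis $\dim\Delta_W = 0$, the complex $\Delta_W$ is nonempty (it contains at least one vertex), and I would combine this with the standing structural assumption on $\Delta$ to conclude that no facet of $\Delta$ can lie entirely in $V$; consequently $Q \not\subseteq \pp_F$ for any facet $F$, and $Q$ contains a non-zero-divisor on $K[\Delta]$.

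Once $\grade(Q,K[\Delta]) \geq 1$ is in place, the chain
\[
1 \;\leq\; \grade(Q,K[\Delta]) \;\leq\; \cd(Q,K[\Delta]) \;=\; 1
\]
forces equality throughout, so by the definition recalled at the start of Section~1, $K[\Delta]$ is relative Cohen--Macaulay with respect to $Q$, and therefore $\Delta$ is relative Cohen--Macaulay with respect to $Q$ over $K$. The main obstacle is the step of ruling out facets sitting entirely inside $V$; an alternative route bypassing this reduction would be to invoke the Reisner-type characterization announced in the introduction and observe that, when $\dim\Delta_W = 0$, one has $\dim\link_{\Delta_W} F \leq 0$ for every $F \in \Delta_W$, which collapses the cohomological vanishing conditions $\widetilde{H}_i((\link(F\cup G))_W;K)=0$ for $i < \dim\link_{\Delta_W} F$ to statements about $\widetilde{H}_{-1}$, and these are readily verified in the present setting.
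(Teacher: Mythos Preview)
Your main line of argument is exactly the paper's: compute $\cd(Q,K[\Delta])=1$ from Proposition~\ref{facet}, then obtain $\grade(Q,K[\Delta])\geq 1$ by checking that $Q$ is not contained in any associated prime of $K[\Delta]$, i.e.\ in any $\pp_F$ for $F$ a facet of $\Delta$. The paper carries this out by noting that the facets of $\Delta_W$ are the singletons $\{w_i\}$ and that $y_i\notin\pp_{\{w_i\}}$, and then concludes $Q\not\subseteq\qq$ for all $\qq\in\Ass(K[\Delta])$.

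You are right to single out ``ruling out facets sitting entirely inside $V$'' as the crux: that is precisely the point the paper passes over quickly. The paper's proof describes the facets of $\Delta_W$, but the associated primes of $K[\Delta]$ are indexed by the facets of $\Delta$, and the passage from one to the other is the implicit step that every facet of $\Delta$ meets $W$. There is no separate ``standing structural assumption'' in the paper beyond $\Delta$ having full vertex set $V\cup W$, so you should not expect to find a hypothesis doing more work than that; the paper simply does not isolate this point.

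One caution about your proposed alternative: invoking the Reisner-type characterization (Theorem~\ref{main}) here is circular, since the proof of the implication (b)$\Rightarrow$(a) in that theorem uses Corollary~\ref{dim0} as the base case of its induction on $\dim\Delta_W$. Moreover, the $\widetilde{H}_{-1}$ conditions are not automatically vacuous: $\widetilde{H}_{-1}((\link G)_W;K)$ vanishes exactly when $(\link_\Delta G)_W\neq\{\emptyset\}$, which is again the statement that the star of $G$ meets $W$---so the alternative route lands on the very same obstacle rather than bypassing it.
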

\begin{proof}
By Proposition \ref{facet} we have $\cd(Q, K[\Delta])=1$. Since $\dim \Delta_W=0$, it follows that the facets of $\Delta_W$ are the forms $F_i=(w_i)$ for $i=1, \dots, n$ and hence $\pp_{F_i}=(x_1, \dots, x_m, y_1, \dots, \hat{y_i}, \dots, y_n)$ where $y_i \not \in \pp_{F_i}$. Thus $Q \not \subseteq \qq$ for all $\qq \in \Ass (K[\Delta])$. Therefore $\grade(Q, K[\Delta])=1$, as required.
\end{proof}
Let $\Delta$ be a simplicial complex on [n]. For a face $F$ of $\Delta$, the link of $F$ in $\Delta$ is the subcomplex
\[
\link_{\Delta} F=\{ G\in \Delta: F \cup G \in \Delta, F \cap G=\emptyset \},
\]
and the star of $F$ in  $\Delta$ is the subcomplex
\[
\star_{\Delta} F=\{ G\in \Delta: F \cup G \in \Delta \}.
\]
 Note that if $\Delta$ be a pure simplicial complex, then for any $F \in \Delta$ we have $\dim \link_{\Delta} F= \dim \Delta- \left|F\right|$. We denote by $\widetilde{H}_{i}(\Delta ;K) $ the $i$th reduced homology group of $\Delta$ with coefficient in $K$, see Chapter 5 in \cite{BH} for details. We say that a simplicial complex $\Delta$ is
connected if there exists a sequence of facets $F = F_0,  \dots,  F_t = G$
such that $F_i\cap F_{i+1}\neq 0$ for $i=0, \dots, t-1 $.  One has,  $\Delta$  is connected if and only if  $\widetilde{H}_{0}(\Delta ;K)=0 $.  We set $\ZZ^m_-=\{a \in \ZZ^m: a_i\leq 0 \; \text{ for } \; i=1, \dots, m\}$ and $\ZZ^n_+=\{b \in \ZZ^n: b_i\geq 0 \; \text {for } \; i=1, \dots, n\}$.
We recall the following theorem from \cite[Theorem 1.3]{AR1}.
\begin{Theorem}
\label{Hochster} Let $I \subset S$ be a squarefree monomial ideal. Then the bigraded Hilbert series of the
local cohomology modules of $K[\Delta]=S/I$ with respect to the
$\ZZ^m \times \ZZ^n$-bigrading is given by
\begin{eqnarray*}
H_{H_{Q}^i(K[\Delta])}(\bold{s},\bold{t})& = & \sum_{{a\in \ZZ^m_+ , b\in \ZZ_-^n}}\dim_K H_{P}^i(K[\Delta])_{(a,b)}\bold{s}^a \bold{t}^b \\&=&\sum_{F\in \Delta_W}\sum_{G\subset V}\dim _K \widetilde{H}_{i-\left|F\right|-1}((\link F \cup G)_W;K)\prod _{v_i\in G}\frac{s_i}{1-s_i} \prod_{w_j\in F}\frac{t_j^{-1}}{1-t_j^{-1}}
\end{eqnarray*}
where $\bold{s}=(s_1, \dots, s_m)$,  $\bold{t}=(t_1, \dots, t_n)$,  $G=\Supp a$,  $F=\Supp b$ and $\Delta$ is the simplicial complex corresponding to the Stanley-Reisner ring $K[\Delta]$.
\end{Theorem}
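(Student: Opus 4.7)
The plan is to compute $H^i_Q(K[\Delta])$ via the \v{C}ech complex on the generators of $Q$ and then extract the $\ZZ^m\times\ZZ^n$-multigraded pieces. Consider
$$0 \to K[\Delta] \to \bigoplus_{j=1}^n K[\Delta]_{y_j} \to \bigoplus_{1\le i<j\le n} K[\Delta]_{y_iy_j} \to \cdots \to K[\Delta]_{y_1\cdots y_n} \to 0,$$
whose cohomology is $H^\bullet_Q(K[\Delta])$ and whose $i$-th term is $\bigoplus_{F\subseteq W,\,|F|=i}K[\Delta]_{y^F}$. The summands with $F\notin\Delta_W$ vanish because the squarefree monomial $y^F$ then lies in $I_\Delta$, making $K[\Delta]_{y^F}=0$.

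Next, I would fix a bidegree $(a,b)$ and inspect $(K[\Delta]_{y^F})_{(a,b)}$. Since no $x_i$ is inverted, $a\in\ZZ^m_+$ is necessary, and standard degree considerations (multiplication by $y_j$ acting invertibly on the \v{C}ech complex in bidegrees with $b_j>0$, giving acyclicity there) restrict non-vanishing cohomology to $b\in\ZZ^n_-$. Set $G=\supp a\subset V$ and $F_0=\supp b\subset W$. A direct Laurent-monomial analysis shows $(K[\Delta]_{y^F})_{(a,b)}\iso K$ precisely when $F\supseteq F_0$, $F\in\Delta_W$, and $G\cup F\in\Delta$, and is zero otherwise.

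Now write $F=F_0\sqcup\tilde F$ with $\tilde F\subseteq W\setminus F_0$. Assuming $G\cup F_0\in\Delta$ (otherwise the entire $(a,b)$-piece vanishes), the condition $G\cup F\in\Delta$ becomes $\tilde F\in(\link_\Delta(F_0\cup G))_W$. With the natural sign conventions the \v{C}ech differential on the $(a,b)$-piece agrees, up to an overall shift by $|F_0|+1$, with the simplicial coboundary of the augmented cochain complex of $(\link_\Delta(F_0\cup G))_W$. Taking cohomology and invoking universal coefficients over the field $K$ gives
$$\dim_K H^i_Q(K[\Delta])_{(a,b)} = \dim_K \widetilde H_{i-|F_0|-1}\bigl((\link_\Delta(F_0\cup G))_W;K\bigr).$$

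Finally, I would assemble the Hilbert series by fixing $G\subset V$ and $F_0\in\Delta_W$, then summing over all $a\in\ZZ^m_+$ with $\supp a=G$ (yielding $\prod_{v_i\in G}s_i/(1-s_i)$) and over all $b\in\ZZ^n_-$ with $\supp b=F_0$ (yielding $\prod_{w_j\in F_0}t_j^{-1}/(1-t_j^{-1})$); summing over the pairs $(G,F_0)$ produces the stated expression. I expect the main obstacle to be the sign bookkeeping that identifies the \v{C}ech coboundary with the simplicial coboundary of $(\link_\Delta(F_0\cup G))_W$, in particular handling the boundary case $F_0=\emptyset$ (which encodes the $(-1)$-dimensional augmentation) and the case $G\cup F_0\notin\Delta$ (where one must verify that both sides vanish term by term).
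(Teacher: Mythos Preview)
The paper does not prove this theorem; it is stated with the preface ``We recall the following theorem from \cite[Theorem 1.3]{AR1}'' and no argument is given. So there is no in-paper proof to compare against.

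That said, your proposal is the standard \v{C}ech-complex derivation of Hochster-type formulas and is sound. The key steps you outline---that $K[\Delta]_{y^F}=0$ unless $F\in\Delta_W$, that the multigraded piece $(a,b)$ can contribute only when $a\in\ZZ^m_+$ and $b\in\ZZ^n_-$, and that the $(a,b)$-strand of the \v{C}ech complex is (after the shift by $|F_0|+1$) the augmented cochain complex of $(\link_\Delta(F_0\cup G))_W$---are exactly the ingredients one expects, and this is almost certainly the route taken in \cite{AR1} as well, since it is the natural adaptation of the classical proof of Hochster's formula (cf.\ \cite[Theorem 5.3.8]{BH}) to the case where one localizes only at the $y$-variables. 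Your caveats about sign bookkeeping, the augmentation in the case $F_0=\emptyset$, and the degenerate case $F_0\cup G\notin\Delta$ are the right places to be careful, but none of them presents a real obstruction: when $F_0\cup G\notin\Delta$ the link is the void complex and both sides vanish, and the $F_0=\emptyset$ case is handled by the usual convention that $\widetilde H_{-1}$ of a nonempty complex is zero while $\widetilde H_{-1}(\{\emptyset\};K)=K$.
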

Here we note that $(\link F \cup G)_W=\link_{\Delta_W}F$.
 As an immediate consequence we obtain
\begin{Corollary}
\label{reduced}
 We have $H_{Q}^i(K[\Delta])_{(a,b)}=0$ for all $i$ and for all $ b \in \ZZ^n$ for which $b_j> 0$ for some $j$,   or for all $ a \in \ZZ^m$ for which $a_i< 0$ for some $i$ and
\[
H_{Q}^i(K[\Delta])_{(a,b)}\iso \widetilde{H}_{i-\left|F\right|-1}((\link F \cup G)_W;K) \quad \text {for all} \quad a \in \ZZ^m_+ \quad \text {and all} \quad  b \in \ZZ^n_-,
\]
 where  $G=\Supp a$ and  $F=\Supp b$.
\end{Corollary}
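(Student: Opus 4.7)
The strategy is to extract individual bigraded components from the Hilbert series identity of Theorem \ref{Hochster} by comparing coefficients of $\bold{s}^a \bold{t}^b$ on the two sides. Once the identity is in hand, the proof reduces to a bookkeeping exercise, so I would not expect any serious obstacle.

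First I would address the vanishing. The right-hand side of Theorem \ref{Hochster} is a sum of rational expressions in which every factor of the form $\frac{s_i}{1-s_i}$ expands as $\sum_{k\geq 1} s_i^k$ and every factor $\frac{t_j^{-1}}{1-t_j^{-1}}$ expands as $\sum_{k\geq 1} t_j^{-k}$. Consequently, only monomials $\bold{s}^a \bold{t}^b$ with $a \in \ZZ^m_+$ and $b \in \ZZ^n_-$ appear with nonzero coefficient. Since the coefficient of $\bold{s}^a \bold{t}^b$ on the left-hand side is exactly $\dim_K H_{Q}^i(K[\Delta])_{(a,b)}$, this immediately forces the graded component to vanish whenever some $a_i$ is negative or some $b_j$ is positive, which is the first assertion of the corollary.

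Next, for fixed $(a,b)\in \ZZ^m_+\times \ZZ^n_-$, set $G=\Supp a$ and $F=\Supp b$. Inside the double sum on the right-hand side of Theorem \ref{Hochster}, a summand indexed by a pair $(F',G')$ produces a term $\prod_{v_i\in G'}\frac{s_i}{1-s_i}\prod_{w_j\in F'}\frac{t_j^{-1}}{1-t_j^{-1}}$ whose monomial support consists precisely of those $\bold{s}^{a'}\bold{t}^{b'}$ with $\Supp a' = G'$ and $\Supp b' = F'$. Therefore the only summand that contributes a nonzero coefficient to $\bold{s}^a\bold{t}^b$ is the one with $G' = G$ and $F' = F$, and matching coefficients yields
\[
\dim_K H_{Q}^i(K[\Delta])_{(a,b)} = \dim_K \widetilde{H}_{i-|F|-1}((\link F \cup G)_W; K).
\]
Since both sides are finite-dimensional $K$-vector spaces, this equality of dimensions upgrades to a $K$-linear isomorphism, which is exactly what the corollary asserts. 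The remark $(\link F \cup G)_W = \link_{\Delta_W}F$ recorded just before the statement then allows the right-hand side to be expressed purely in terms of the subcomplex $\Delta_W$ if desired.

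The only small subtlety worth emphasizing is that the identification in Theorem \ref{Hochster} is an identity of formal series, so the conclusion is a priori only an equality of $K$-dimensions; the passage to an actual isomorphism is legitimate because the graded pieces are finite-dimensional $K$-vector spaces, and no natural $S$-module structure is being asserted on individual components.
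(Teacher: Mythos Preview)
Your argument is correct and is precisely the intended one: the paper records Corollary~\ref{reduced} as ``an immediate consequence'' of Theorem~\ref{Hochster} without further proof, and what you have written is exactly the coefficient-comparison that makes this immediacy explicit. There is nothing to add.
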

As a main result of this section we have the following. Here we follow the proof \cite[Theorem 8.1.6]{HH}.
\begin{Theorem}
\label{main}
Let $\Delta$ be a simplicial complex over a field $K$. The following conditions are equivalent.
\begin{itemize}
\item[{(a)}] $\Delta$ is relative Cohen--Macaulay with respect to $Q$ with $\cd(Q,K[\Delta])=q$,
\item[(b)] $\widetilde{H}_{i}((\link F \cup G)_W;K)=0$ for all $F\in \Delta_W$, $G\subset V$ and all $i< \dim \link_{\Delta_W} F$.
\end{itemize}
\end{Theorem}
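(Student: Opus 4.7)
The plan is to emulate the classical proof of Reisner's criterion (see \cite[Theorem 8.1.6]{HH}) in the bigraded setting, with the bigraded Hochster-type formula of Corollary~\ref{reduced} serving as the translation device.

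First I will translate condition (a) into a vanishing of reduced homology groups. Since $H^i_Q(K[\Delta]) = 0$ for every $i > \cd(Q, K[\Delta])$, condition (a) is equivalent to $H^i_Q(K[\Delta]) = 0$ for all $i < q$, and Proposition~\ref{facet} identifies $q = \dim \Delta_W + 1$. By Corollary~\ref{reduced}, $H^i_Q(K[\Delta]) = 0$ if and only if $\widetilde{H}_{i-|F|-1}((\link F \cup G)_W; K) = 0$ for every $F \in \Delta_W$ and every $G \subset V$. Substituting $j = i - |F| - 1$, condition (a) is thus equivalent to
\[
\widetilde{H}_j((\link F \cup G)_W; K) = 0 \quad \text{for all} \quad F \in \Delta_W, \; G \subset V, \; j < \dim \Delta_W - |F|,
\]
and the theorem reduces to matching this statement with (b).

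For the implication (a) $\Rightarrow$ (b), I will observe that $\dim \link_{\Delta_W} F \leq \dim \Delta_W - |F|$ holds for every $F \in \Delta_W$, so the range $j < \dim \link_{\Delta_W} F$ is contained in $j < \dim \Delta_W - |F|$, and (b) follows at once from the displayed condition. (One could alternatively invoke Corollary~\ref{pure} to replace the inequality by an equality.)

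For the converse (b) $\Rightarrow$ (a), I will first show that (b) forces $\Delta_W$ to be pure. Specializing (b) to $G = \emptyset$ and noting that $(\link F)_W = \link_{\Delta_W} F$ for every $F \in \Delta_W$, the resulting condition $\widetilde{H}_i(\link_{\Delta_W} F; K) = 0$ for $F \in \Delta_W$ and $i < \dim \link_{\Delta_W} F$ is exactly the hypothesis of the classical Reisner criterion \cite[Theorem 8.1.6]{HH} applied to $K[\Delta_W]$ over $K[y_1, \dots, y_n]$. Hence $K[\Delta_W]$ is Cohen--Macaulay, and in particular $\Delta_W$ is pure. With purity in hand, $\dim \link_{\Delta_W} F = \dim \Delta_W - |F|$ for every $F \in \Delta_W$, so (b) upgrades to the displayed condition and thus yields (a). The principal obstacle is precisely this reverse direction: since (b) is vacuous at facets of $\Delta_W$ (where the link has dimension $-1$), it does not by itself rule out the nonzero graded components of $H^i_Q(K[\Delta])$ for $i < q$ that would arise if $\Delta_W$ were non-pure, and the fix is the bootstrap through Reisner's classical criterion applied to $\Delta_W$.
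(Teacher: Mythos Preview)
Your proof is correct. The translation of (a) via Corollary~\ref{reduced} and the forward implication match the paper's argument exactly. The difference lies in how purity of $\Delta_W$ is obtained in the direction (b) $\Rightarrow$ (a): you specialize (b) to $G=\emptyset$, recognize the resulting condition as precisely the hypothesis of the classical Reisner criterion for $\Delta_W$, and cite \cite[Theorem~8.1.6]{HH} to conclude that $K[\Delta_W]$ is Cohen--Macaulay and hence $\Delta_W$ is pure. The paper instead reproduces the inductive mechanism of Reisner's proof inside the argument: it verifies that every proper link $\Gamma=\link_{\Delta_W}F$ again satisfies the vanishing hypothesis, invokes induction on $\dim\Delta_W$ (with base case Corollary~\ref{dim0}) to make each such $\Gamma$ Cohen--Macaulay, and combines this with connectedness ($\widetilde H_0(\Delta_W;K)=0$) to force purity.

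Your route is shorter and cleaner, but it consumes Reisner's criterion as an input. The paper's route is self-contained, which matters here because the authors immediately present the classical Reisner criterion as a \emph{corollary} (Corollary~\ref{reisner}) of Theorem~\ref{main}; with your arrangement that deduction becomes logically trivial rather than a genuine recovery of Reisner from the bigraded statement. If you do not care about that expository point, your argument is preferable.
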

\begin{proof}
Note that $\dim \Delta_W=q-1$ by Proposition \ref{facet}. Let $\Delta$ be relative Cohen--Macaulay with respect to $Q$. This is equivalent to saying that  $H_{Q}^i(K[\Delta])=0$ for all $i \neq q$. Hence by Corollary \ref{reduced}, this is equivalent to saying that
\begin{eqnarray}
\label{3}
\widetilde{H}_{i-\left|F\right|-1}((\link F \cup G)_W;K)=0 \; \text { for all} \; F\in \Delta_W, G\subset V \; \text{ and all} \; i< q.
\end{eqnarray}
$(a)\Longrightarrow (b)$:  Since $\Delta$ is relative Cohen--Macaulay with respect to $Q$, by Corollary \ref{pure} it follows that  $\Delta_W$ is pure and hence $\dim \link_{\Delta_W} F=\dim \Delta_W-\left|F\right|=q-\left|F\right|-1$. Therefore (4) implies that $\widetilde{H}_{i}((\link F \cup G)_W;K)=0$ for all $F\in \Delta_W$, $G\subset V$ and all $i< \dim \link_{\Delta_W} F$.

$(b)\Longrightarrow (a)$: Let $F\in \Delta_W$, $G\subset V$  and  $H\in \link_{\Delta_W} F$. Set $ \Gamma=\link_{\Delta_W} F$. One has
\[
\link _{\Gamma}H= \link_{\Delta_W}(H \cup F)=\big (\link (H\cup F \cup G) \big)_W.
\]
Hence our assumption yields
\[
\widetilde{H}_{i}(\link _{\Gamma}H;K)=\widetilde{H}_{i}\big(\big (\link (H\cup F \cup G) \big)_W ;K\big)=0 \; \text { for  all } \;  i< \dim \link_{\Gamma} H.
\]
Thus, by induction on the $\dim \Delta_W$ we may assume that all proper links of $\Delta_W$ are Cohen--Macaulay over $K$. In particular, the link of each vertex of $\Delta_W$ is pure. Thus all facets containing a given vertex have the same dimension. Now, let $\dim \Delta_W=0$, by Corollary \ref{dim0},  $\Delta$ is relative Cohen--Macaulay with respect to $Q$. Thus we may assume that $\dim \Delta_W \geq 1.$ Since $\widetilde{H}_0(\Delta_W; K)=\widetilde{H}_0(\link_{\Delta_W}\emptyset ;K)=0$, it follows that $\Delta_W$ is connected.
Thus $\Delta_W $ is a pure simplicial complex and hence for any $F\in \Delta_W$, we have $\dim \link_{\Delta_W} F=q-\left|F\right|-1$. Thus our hypothesis implies (4) and so $\Delta$ is relative Cohen--Macaulay with respect to $Q$.
\end{proof}

As an immediate consequence we obtain the Reisner's criterion for Cohen--Macaulay simplicial complexes
\begin{Corollary}
\label{reisner}
Let $\Delta$ be a simplicial complex and $K$ a field. Then, $\Delta$ is Cohen--Macaulay over $K$ if and only if
$\widetilde{H}_{i}(\link F ;K)=0$ for all $F\in \Delta$ and all $i< \dim \link F$.
\end{Corollary}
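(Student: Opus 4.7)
The plan is to derive Corollary \ref{reisner} as the specialization of Theorem \ref{main} to the case $m=0$. When $m=0$, the polynomial ring $S$ reduces to $K[y_1,\dots,y_n]$, the ideal $Q$ becomes the full graded maximal ideal $\mm$, and the vertex set $V=\{v_1,\dots,v_m\}$ is empty, so $W=[n]$ and $\Delta_W=\Delta$. Under these identifications, being relative Cohen--Macaulay with respect to $Q$ for $K[\Delta]$ coincides with being Cohen--Macaulay in the usual sense, since $H^i_Q(K[\Delta])=H^i_{\mm}(K[\Delta])$ for all $i$.

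First I would note that, with $V=\emptyset$, the only subset $G\subset V$ appearing in the hypothesis of Theorem \ref{main}(b) is $G=\emptyset$, and for any $F\in\Delta_W=\Delta$ the set $(\link F\cup G)_W$ simplifies to $\link_\Delta F$. Consequently, the condition
\[
\widetilde{H}_{i}((\link F\cup G)_W;K)=0\quad\text{for all } F\in\Delta_W,\ G\subset V,\ i<\dim\link_{\Delta_W}F
\]
becomes precisely
\[
\widetilde{H}_{i}(\link_\Delta F;K)=0\quad\text{for all } F\in\Delta,\ i<\dim\link_\Delta F,
\]
which is Reisner's condition.

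Next, I would directly invoke the equivalence (a)$\iff$(b) of Theorem \ref{main}: the left-hand side ``$\Delta$ is relative Cohen--Macaulay with respect to $Q$'' becomes ``$K[\Delta]$ is Cohen--Macaulay'', i.e., ``$\Delta$ is Cohen--Macaulay over $K$'', while the right-hand side is exactly Reisner's criterion as rewritten above. Composing these identifications yields the corollary.

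There is essentially no obstacle here since the corollary is a direct translation of the main theorem in the degenerate case $m=0$; the only care needed is to verify the two notational reductions (that $G$ collapses to the empty set and that the restriction to $W$ is trivial), both of which are immediate from the definitions. Hence the proof is a one-line application of Theorem \ref{main}.
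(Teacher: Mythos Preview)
Your proposal is correct and matches the paper's own proof: the corollary is obtained by specializing Theorem \ref{main} to $m=0$, noting that then $V=\emptyset$, $G=\emptyset$, $\Delta_W=\Delta$, $(\link F\cup G)_W=\link F$, and $Q=\mm$ so that relative Cohen--Macaulayness with respect to $Q$ is ordinary Cohen--Macaulayness.
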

\begin{proof}
In Theorem \ref{main} we assume that $m=0$, then $G=\emptyset$, $(\link F \cup G)_W=\link F$ , $\Delta_W=\Delta$ and $Q$ is the unique maximal ideal $\mm$ and  $\cd(Q, K[\Delta])=\dim K[\Delta]$.
\end{proof}
In the proof of the theorem we showed
\begin{Corollary}
Let $\Delta$ be relative Cohen–-Macaulay with respect to $Q$, then $\Delta_W$ is connected.
\end{Corollary}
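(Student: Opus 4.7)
The plan is to read off connectedness of $\Delta_W$ directly from condition (b) of Theorem~\ref{main}. First I would recall the standard topological fact that a nonempty simplicial complex $\Gamma$ is connected if and only if $\widetilde{H}_0(\Gamma; K) = 0$; this reduces the problem to establishing this single vanishing for $\Gamma = \Delta_W$.

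Next I would specialize condition (b) of Theorem~\ref{main} to the pair $F = \emptyset \in \Delta_W$ and $G = \emptyset \subset V$. With these choices one has $\link_\Delta \emptyset = \Delta$, so that $(\link F \cup G)_W = \Delta_W$, and simultaneously $\link_{\Delta_W} \emptyset = \Delta_W$. In the nontrivial case $\dim \Delta_W \geq 1$, the index $i = 0$ then satisfies $i < \dim \link_{\Delta_W} \emptyset$, so condition (b) immediately delivers $\widetilde{H}_0(\Delta_W; K) = 0$, whence $\Delta_W$ is connected.

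The hard part, if one can call it that, is essentially bookkeeping: verifying the identification $(\link F \cup G)_W = \Delta_W$ from the definitions of link and restriction, and disposing of the low-dimensional degeneracy $\dim \Delta_W \leq 0$, in which $\Delta_W$ is either irrelevant, void, or a single vertex, so the claim is vacuous or trivial (and in the $0$-dimensional case is the content of the setup used in Corollary~\ref{dim0}). No new tools beyond Theorem~\ref{main} are required.
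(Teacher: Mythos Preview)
Your main argument for the case $\dim \Delta_W \geq 1$ is correct and is exactly what the paper does: the paper's ``proof'' simply points back to the proof of Theorem~\ref{main}, where in the direction (b)$\Rightarrow$(a) the vanishing $\widetilde{H}_0(\Delta_W;K)=\widetilde{H}_0(\link_{\Delta_W}\emptyset;K)=0$ is obtained from condition (b) with $F=\emptyset$, $G=\emptyset$, and connectedness is read off from that.

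There is, however, a genuine slip in your treatment of the degenerate case. You assert that when $\dim \Delta_W=0$ the complex $\Delta_W$ must be ``a single vertex''; this is false. A $0$-dimensional $\Delta_W$ can consist of several isolated vertices, and Corollary~\ref{dim0} does \emph{not} help you here: that corollary says that $\dim\Delta_W=0$ forces $\Delta$ to be relative Cohen--Macaulay with respect to $Q$, not that $\Delta_W$ is connected. In fact, take $m=0$, $n=2$, $I_\Delta=(y_1y_2)$: then $K[\Delta]$ is Cohen--Macaulay of dimension $1$, hence relative Cohen--Macaulay with respect to $Q$, yet $\Delta_W$ consists of two isolated vertices and is disconnected. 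So the corollary as stated requires the hypothesis $\cd(Q,K[\Delta])\geq 2$ (equivalently $\dim\Delta_W\geq 1$), which is precisely the range in which the paper's proof of Theorem~\ref{main} establishes connectedness; your low-dimensional ``bookkeeping'' cannot be completed because the statement itself fails there.
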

\begin{Corollary}
Let $\Delta$ be a relative Cohen–-Macaulay complex with respect to $Q$ and F is a face of $\Delta_W$.
Then $\link_{\Delta_W} F$ is Cohen–-Macaulay.
\end{Corollary}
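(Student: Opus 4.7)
The plan is to reduce the claim to Reisner's criterion (Corollary \ref{reisner}) for the simplicial complex $\Gamma := \link_{\Delta_W} F$, and then verify the vanishing of reduced homology of links in $\Gamma$ by feeding the defining characterization of relative Cohen--Macaulayness (Theorem \ref{main}) at the face $F\cup H$ with $G=\emptyset$. Since the proof of the implication $(b)\Longrightarrow(a)$ in Theorem \ref{main} already contained the key identity relating links inside $\Gamma$ to restricted links in $\Delta$, this amounts to making that observation stand on its own.

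First I would fix an arbitrary face $H\in \Gamma=\link_{\Delta_W} F$. The key combinatorial identity is
\[
\link_{\Gamma} H \;=\; \link_{\Delta_W}(F\cup H) \;=\; \bigl(\link_\Delta (F\cup H)\bigr)_W,
\]
which follows directly from the definitions of link and of the restriction $(-)_W$ (note that $F,H\subseteq W$, so no vertex in $V$ enters). Equivalently, this is the statement $(\link (F\cup H\cup G))_W=\link_\Gamma H$ with the choice $G=\emptyset$.

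Next I would apply Theorem \ref{main} to $\Delta$. Since $\Delta$ is relative Cohen--Macaulay with respect to $Q$, taking the face $F\cup H\in \Delta_W$ and $G=\emptyset\subset V$ yields
\[
\widetilde{H}_{i}\bigl((\link(F\cup H))_W;K\bigr)=0 \quad\text{for all } i<\dim \link_{\Delta_W}(F\cup H).
\]
Combining this with the identity above gives
\[
\widetilde{H}_{i}(\link_{\Gamma} H;K)=0\quad\text{for all } i<\dim \link_{\Gamma} H,
\]
since $\dim \link_{\Gamma} H = \dim \link_{\Delta_W}(F\cup H)$.

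As $H\in \Gamma$ was arbitrary, Reisner's criterion (Corollary \ref{reisner}) then certifies that $\Gamma=\link_{\Delta_W} F$ is Cohen--Macaulay over $K$, as desired. The only subtle point is making sure one invokes Theorem \ref{main} with the correct face (namely $F\cup H$, which is indeed in $\Delta_W$ because $H\in\link_{\Delta_W}F$ forces $F\cup H\in \Delta_W$); no further obstacle appears, and no inductive argument is needed because the vanishing hypothesis of Theorem \ref{main} is already strong enough to cover all links of faces of $\Delta_W$ simultaneously.
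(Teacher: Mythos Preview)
Your proof is correct and follows essentially the same route as the paper: the paper's proof simply points back to the identity $\link_{\Gamma}H=\link_{\Delta_W}(F\cup H)=\bigl(\link(F\cup H\cup G)\bigr)_W$ established at the start of the proof of Theorem~\ref{main} $(b)\Rightarrow(a)$, combines it with the vanishing furnished by $(a)\Rightarrow(b)$, and then invokes Reisner's criterion (Corollary~\ref{reisner}). Your write-up is just a more explicit unpacking of that reference, specializing to $G=\emptyset$, which is all that is needed.
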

\begin{proof}
The assertion follows from the beginning of the proof Theorem \ref{main} $(b)\Longrightarrow (a)$ and Corollary \ref{reisner}.
\end{proof}

Let $I\subset S$ be a monomial ideal and $G(I)$ the unique minimal monomial system of generators of $I$. For a monomial $u \in S$ we may write $u=u_1u_2$ where $u_1=x_1^{c_1}\dots x_m^{c_m}$  and $u_2=y_1^{d_1}\dots y_n^{d_n}$.  We set $\nu_i(u_1)=c_i$ for $i=1,\ldots,m$ and $\nu_j(u_2)=d_j$ for $j=1,\ldots,n$. We also set $\sigma_i = \max\{\nu_i(u_1): u\in G(I)\}$
for $i=1,\ldots, m$  and $\rho_j = \max\{\nu_j(u_2)\: u\in G(I)\}$ for $j=1,\ldots, n$. For $b=(b_1, \dots, b_n) \in \ZZ^n$ we set $G_b=\{ j: 1\leq j \leq n, b_j<0 \}$ and let $a \in \ZZ^m_+$. We define the simplicial complex $\Delta_{(a,b)}(I)$ whose faces are the set $L-G_b$ with $G_b \subseteq L$ and such that $L$ satisfies the following conditions: for all $u\in G(I)$ there exists $j\notin L$  such that $\nu_j(u_2) > b_j\geq 0$,   or for at least one $i$, $\nu_i(u_1) > a_i\geq 0$.
We recall the following theorem from \cite[Theorem 2.4]{AR1}.
\begin{Theorem}
\label{Hochster 2} Let $I\subset S$ be a monomial ideal. Then the Hilbert series
of the local cohomology modules of $S/I$ with respect to the
$\ZZ^m \times \ZZ^n$-bigrading is given by
\[
H_{H_{Q}^i(S/I)}(\bold{s},\bold{t})
=\sum \sum \dim_K\tilde{H}_{i-\vert F\vert -1}(\Delta_{(a,b)}(I); K) {\bf s}^a {\bf t}^b,
\]
where the first sum runs over all $F\in \Delta_W$, $b\in\ZZ^n$ for which $G_b = F$ and $b_j\leq \rho_j-1$ for  $j=1,\ldots,n$, and the
second sum runs over all $a\in\ZZ^m$ for which $N_a = G$ and $a_i\geq
\sigma_i-1$ for  $i=1,\ldots,m$. Here $N_a=\Supp a$ and
 $\Delta$ is the
simplicial complex corresponding to the Stanley-Reisner
ideal $\sqrt{I}$.
\end{Theorem}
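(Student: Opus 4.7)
The plan is to mimic Takayama's $\ZZ^n$-graded Hilbert series formula for the local cohomology of a quotient of a polynomial ring by a monomial ideal, now in the $\ZZ^m\times\ZZ^n$-graded setting. Since $I$ is monomial, $S/I$ is $\ZZ^m\times\ZZ^n$-graded, so I would compute $H^i_Q(S/I)$ via the \v{C}ech complex
\[
C^\bullet(\bold y;S/I)\: 0\to S/I\to\bigoplus_j (S/I)_{y_j}\to\cdots\to (S/I)_{y_1\cdots y_n}\to 0,
\]
and examine its $(a,b)$-component for fixed $(a,b)\in\ZZ^m\times\ZZ^n$ before resumming.

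I would first identify the $(a,b)$-component of each localisation $(S/I)_{y_\sigma}$ (with $\sigma=\{j_1,\dots,j_k\}\subseteq[n]$): an element $[x^ay^c/y^\alpha]$ lies in it exactly when $\alpha$ is supported on $\sigma$ and $c-\alpha=b$, and by normalising so that the slots outside $\sigma\cup G_b$ carry the nonnegative exponents $\max(b_j,0)$, each class admits a unique canonical representative. The resulting piece is either zero or one-dimensional over $K$, and it is one-dimensional exactly when the normalised representative avoids $I$, a condition expressible purely combinatorially in terms of $\sigma\setminus G_b$, the entries of $a$, and the generators of $I$.

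Next I would recognise that, as $\sigma$ varies, the surviving pieces are indexed by the faces of a simplicial complex on $[n]\setminus G_b$ whose membership condition is exactly that defining $\Delta_{(a,b)}(I)$, and that the \v{C}ech differential on $C^\bullet_{(a,b)}$ coincides, up to signs, with the simplicial coboundary. This gives
\[
H^i(C^\bullet_{(a,b)})\iso \widetilde H^{\,i-\vert F\vert-1}(\Delta_{(a,b)}(I);K)\iso \widetilde H_{i-\vert F\vert-1}(\Delta_{(a,b)}(I);K),
\]
where $F=G_b$, the shift $-\vert F\vert-1$ reflects the removal of $F$ from the ambient vertex set, and the second isomorphism is the universal coefficient theorem over a field. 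Collecting Hilbert series contributions over the admissible $(a,b)$ yields the stated formula; the truncations $b_j\leq\rho_j-1$ and $a_i\geq\sigma_i-1$ cut off exactly the region beyond which $\Delta_{(a,b)}(I)$ stabilises, so nothing is missed or double-counted.

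The main obstacle is the combinatorial bookkeeping: one must verify carefully that the normalisation chosen produces consistent signs between the \v{C}ech and simplicial differentials, that a general (not necessarily squarefree) monomial ideal really does lead to the explicit membership condition defining $\Delta_{(a,b)}(I)$, and that the truncations in $(a,b)$ are the tight ones. This case analysis on the signs and magnitudes of the components of $(a,b)$ is where the real work in \cite{AR1} lies.
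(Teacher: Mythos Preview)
The paper does not prove this theorem at all: it is introduced with ``We recall the following theorem from \cite[Theorem 2.4]{AR1}'' and no argument is given. So there is no proof in the present paper to compare against.

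Your sketch is the expected approach and is, in outline, exactly what underlies the result in \cite{AR1}: compute $H^i_Q(S/I)$ via the \v{C}ech complex on $y_1,\dots,y_n$, analyse each $\ZZ^m\times\ZZ^n$-graded piece $(S/I)_{y_\sigma}{}_{(a,b)}$, show that the surviving summands are indexed by the faces of the simplicial complex $\Delta_{(a,b)}(I)$, identify the \v{C}ech differential with the augmented simplicial cochain differential (hence the shift $i-|F|-1$ with $F=G_b$), and pass to homology over the field $K$. Your remarks about the truncations $b_j\le\rho_j-1$ and $a_i\ge\sigma_i-1$ and about the sign/normalisation bookkeeping are accurate descriptions of where the actual work lies; you even point to \cite{AR1} for it. In short, your proposal is correct and coincides with the method of the cited source, while the present paper simply quotes the result.
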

The precise expression of the Hilbert series is given in \cite{AR1}. As a first consequence we have
\begin{Corollary}
\label{zero}
we have $H_{Q}^i(S/I)_{(a,b)}=0$ for all $i$ and for all $b\in\ZZ^n$ for which $b_j> \rho_j-1$ for some $j$, or for all $ a \in \ZZ^m$ for which $a_i< \sigma_i-1 $ for some $i$  and
\[
H_{Q}^i(S/I)_{(a,b)} \iso \widetilde{H}_{i-\left|F\right|-1}(\Delta_{(a,b)}(I) ;K)
\]
for all  $b\in\ZZ^n$ with  $b_j\leq \rho_j-1$ for  $j=1,\ldots,n$, and $G_b =F$ and for all  $a \in \ZZ^m$ with $a_i\geq \sigma_i-1$ for  $i=1,\ldots,m$ and $N_a = G$.
\end{Corollary}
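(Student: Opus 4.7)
The plan is to read both conclusions off directly from the Hilbert series formula of Theorem \ref{Hochster 2}. Because $H^i_Q(S/I)$ is a bigraded $S$-module, its $\ZZ^m\times\ZZ^n$-graded Hilbert series satisfies
\[
H_{H_Q^i(S/I)}(\bold s,\bold t)=\sum_{a,b}\bigl(\dim_K H^i_Q(S/I)_{(a,b)}\bigr)\,\bold s^a\bold t^b,
\]
so any assertion about a bigraded component $H^i_Q(S/I)_{(a,b)}$ (up to isomorphism of $K$-vector spaces) is equivalent to an assertion about the corresponding monomial coefficient of the Hilbert series.

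First I would address the vanishing statement. In Theorem \ref{Hochster 2}, the outer index $b$ ranges only over vectors with $G_b=F\in\Delta_W$ and $b_j\le \rho_j-1$ for every $j$, while the inner index $a$ ranges only over vectors with $N_a=G$ and $a_i\ge \sigma_i-1$ for every $i$. Consequently the coefficient of $\bold s^a\bold t^b$ in $H_{H_Q^i(S/I)}(\bold s,\bold t)$ is zero whenever some $b_j>\rho_j-1$, or some $a_i<\sigma_i-1$. Translating back, this forces $\dim_K H^i_Q(S/I)_{(a,b)}=0$, hence the component itself is zero, which gives the first half of the corollary.

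Next, for the isomorphism statement, I would fix $(a,b)$ satisfying $b_j\le \rho_j-1$ for all $j$ and $a_i\ge \sigma_i-1$ for all $i$, and set $F=G_b$ and $G=N_a$. Then such a pair $(a,b)$ appears exactly once in the double sum of Theorem \ref{Hochster 2}, and the coefficient it contributes is precisely $\dim_K\widetilde H_{i-|F|-1}(\Delta_{(a,b)}(I);K)$. Equating this with the coefficient coming from the Hilbert series expansion gives
\[
\dim_K H_Q^i(S/I)_{(a,b)}=\dim_K\widetilde H_{i-|F|-1}(\Delta_{(a,b)}(I);K),
\]
and since both sides are finite-dimensional $K$-vector spaces, they are isomorphic, yielding the second claim.

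There is essentially no serious obstacle here: the corollary is a bookkeeping consequence of Theorem \ref{Hochster 2}, and the only thing to be careful about is the precise index range in the Hochster-type formula. The one point worth checking is that the indexing conditions $G_b=F$, $N_a=G$, $b_j\le \rho_j-1$ and $a_i\ge \sigma_i-1$ pick out exactly one term in the double sum for each admissible $(a,b)$, so that the coefficient can be read off unambiguously; this is immediate from the way the sums in Theorem \ref{Hochster 2} are parametrized.
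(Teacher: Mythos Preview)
Your proposal is correct and matches the paper's approach: the paper states this corollary as an immediate consequence of Theorem~\ref{Hochster 2} without a separate proof, and your argument simply spells out how the bigraded components are read off from the coefficients of the Hilbert series. There is nothing to add.
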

For a bigraded $S$-module $M$, we recall the  $a$-invariant of $M$ by
\[
a^i_Q(M)=\sup \{ \mu: H_{Q}^i(M)_{(*,\mu)}\neq 0 \},
\]
and so $\reg(M)=\underset{i}{\max} \{a^i_Q(M)+i: i\geq 0 \} $.
\begin{Corollary}
Suppose  $I\subseteq S$ be a monomial ideal such that $S/I$ is relative Cohen--Macaulay with respect to $Q$ with $\cd(Q,S/I)=q$. then
\[
\reg(S/I)\leq \sum_{j=1}^n \rho_j-n+q.
\]
\end{Corollary}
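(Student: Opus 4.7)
The plan is to exploit the relative Cohen--Macaulay hypothesis to collapse the computation of $\reg(S/I)$ onto the single cohomology $H^q_Q(S/I)$, and then to bound its top $y$-degree using the componentwise vanishing statement from Corollary \ref{zero}.

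First, I would use that $S/I$ is relative Cohen--Macaulay with respect to $Q$ with $\cd(Q,S/I)=q$ to conclude $H^i_Q(S/I)=0$ for every $i\neq q$. Consequently $a^i_Q(S/I)=-\infty$ for $i\neq q$, and by the definition $\reg(M)=\max_i\{a^i_Q(M)+i\}$ the regularity of $S/I$ reduces to
\[
\reg(S/I)=a^q_Q(S/I)+q,
\]
so the entire problem is to show $a^q_Q(S/I)\le \sum_{j=1}^n\rho_j-n$.

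Next, I would appeal to Corollary \ref{zero}, which states that $H^q_Q(S/I)_{(a,b)}=0$ as soon as $b_j>\rho_j-1$ for some index $j$. Taking the contrapositive, any nonvanishing bigraded component $H^q_Q(S/I)_{(a,b)}\neq 0$ forces $b_j\le\rho_j-1$ for every $j=1,\dots,n$. Summing these inequalities, the total $y$-degree $\mu=b_1+\cdots+b_n$ of any such component satisfies
\[
\mu\;\le\;\sum_{j=1}^n(\rho_j-1)\;=\;\sum_{j=1}^n\rho_j-n.
\]
Since $a^q_Q(S/I)=\sup\{\mu:H^q_Q(S/I)_{(*,\mu)}\neq 0\}$, this yields the required bound $a^q_Q(S/I)\le\sum_{j=1}^n\rho_j-n$, and combining with the equality of the first paragraph we obtain the claimed inequality
\[
\reg(S/I)\le\sum_{j=1}^n\rho_j-n+q.
\]

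There is no serious obstacle here: once the two ingredients (the vanishing of $H^i_Q(S/I)$ for $i\neq q$ and the componentwise vanishing of Corollary \ref{zero}) are in place, the estimate is a straightforward two-line computation. The only subtlety to double-check is the interpretation of the index $\mu$ in the definition of $a^q_Q$ as the total $y$-degree $b_1+\cdots+b_n$, so that the componentwise bound $b_j\le\rho_j-1$ can be summed across $j$ to control $\mu$.
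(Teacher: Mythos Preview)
Your proof is correct and follows essentially the same route as the paper: both use Corollary \ref{zero} to bound each coordinate $b_j\le\rho_j-1$ for nonvanishing components, sum to get $a^q_Q(S/I)\le\sum_{j=1}^n\rho_j-n$, and then combine with the relative Cohen--Macaulay hypothesis to obtain the regularity bound. The paper makes the passage from the $\ZZ^m\times\ZZ^n$-grading to the total $y$-degree explicit via the direct-sum decomposition $H^q_Q(S/I)_{(k,j)}=\bigoplus_{|a|=k,\,|b|=j}H^q_Q(S/I)_{(a,b)}$, which is precisely the ``subtlety'' you flagged at the end.
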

\begin{proof}
Note that for all $k , j
\in \ZZ$ we have
\[ 
H_{Q}^q(S/I)_{(k,j)}=\Dirsum_{{a \in \ZZ^m,\left|a\right|=k}\atop {b \in \ZZ^n,\left|b\right|=j}} H_{Q}^q(S/I)_{(a,b)},
\] 
where $\left|a\right|=\sum_{i=1}^m a_i$ for $a=(a_1,\dots,a_m)$ and $\left|b\right|=\sum_{i=1}^n b_i$ for $b=(b_1,\dots,b_n)$. By Corollary \ref{zero} we have that  $H_{Q}^q(S/I)_{(k,j)}=0$ for $k<\sum_{i=1}^m \sigma_i-m$  or $j>\sum_{j=1}^n \rho_j-n$. Thus we have
\[
H_{Q}^q(S/I)_j=\Dirsum_k H_{Q}^q(S/I)_{(k,j)}=0 \; \text { for } \; j>\sum_{j=1}^n \rho_j-n.
\]
Hence  $a^q_Q(S/I)\leq \sum_{j=1}^n \rho_j-n$ and so the conclusion follows.
\end{proof}
As a generalization of \cite[Corollary 2.3]{HTT} we have
\begin{Corollary}
\label{radical}
Let $I\subset S$ be a monomial ideal. Then for all $i$ we have the following isomorphisms of $K$-vector spaces
\[
H_{Q}^i(S/I)_{(a,b)}\iso H_{Q}^i(S/\sqrt{I})_{(a,b)},
\]
for all $a\in\ZZ^m_+$ and $b\in\ZZ^n_-.$ In particular, $\cd(Q,S/I)=\cd(Q,S/{\sqrt{I}})$.
\end{Corollary}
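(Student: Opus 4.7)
The plan is to apply the Hochster--Takayama formula (Corollary \ref{zero}, derived from Theorem \ref{Hochster 2}) to both $S/I$ and $S/\sqrt{I}$, thereby reducing the claimed isomorphism to a combinatorial identification of the simplicial complexes $\Delta_{(a,b)}(I)$ and $\Delta_{(a,b)}(\sqrt{I})$. Since $\sqrt{I}$ is squarefree, its invariants $\sigma_i$ and $\rho_j$ are at most $1$, so the formula of Corollary \ref{zero} applies to $\sqrt{I}$ over the entire range $a\in\ZZ^m_+$, $b\in\ZZ^n_-$; for $S/I$ the formula applies in the valid range, and outside it both sides vanish by the vanishing part of Corollary \ref{zero}. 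Once the equality $\Delta_{(a,b)}(I)=\Delta_{(a,b)}(\sqrt{I})$ is established, passing to $\widetilde{H}_{i-|F|-1}(-;K)$ gives the desired isomorphism of $K$-vector spaces.

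To establish that equality, I would unpack the face condition for $a\in\ZZ^m_+$ and $b\in\ZZ^n_-$. Writing $u\in G(I)$ as $u=u_1u_2$ with $u_1\in K[x]$ and $u_2\in K[y]$, the clause ``$b_j\ge 0$'' restricts the first disjunct to $j\notin G_b$ (where $b_j=0$), while ``$a_i\ge 0$'' is automatic. Thus $L-G_b$ is a face of $\Delta_{(a,b)}(I)$ if and only if for every $u\in G(I)$, either $\Supp(u_2)\not\subseteq L$ or $u_1\nmid x^a$. This condition on $u$ depends only on $\Supp(u_2)$ and the divisibility $u_1\mid x^a$. The key combinatorial step is to match this with the analogous condition coming from $G(\sqrt{I})$: if $u\in G(I)$ and $u_1\mid x^a$, then $\sqrt{u_1}\mid x^a$ and $\Supp(\sqrt{u_2})=\Supp(u_2)$; conversely, every $v\in G(\sqrt{I})$ arises as $\sqrt{u}$ for some $u\in G(I)$. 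Taking minimal forbidden supports then yields $\Delta_{(a,b)}(I)=\Delta_{(a,b)}(\sqrt{I})$.

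The ``in particular'' assertion follows at once: since $\Min(S/I)=\Min(S/\sqrt{I})$ and $\cd(Q,-)$ is determined by the minimal primes via the formula noted after equation (3) in Section 1, one has $\cd(Q,S/I)=\cd(Q,S/\sqrt{I})$. The main obstacle lies in the combinatorial matching of the previous paragraph, particularly in the narrow window $a_i\in[\sigma_i-1,\sigma_i)$, where ``$\sqrt{u_1}\mid x^a$'' may hold while ``$u_1\nmid x^a$'' does, so the two face conditions look different; I would handle this by showing that any such $u\in G(I)$ contributes only a non-minimal constraint on $L$, so the two simplicial complexes still coincide.
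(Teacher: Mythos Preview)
Your plan mirrors the paper's proof exactly: both reduce to the combinatorial claim $\Delta_{(a,b)}(I)=\Delta_{(a,b)}(\sqrt{I})$ and then invoke Corollary~\ref{zero}; the paper simply attributes that equality to the argument of \cite[Corollary~2.3]{HTT}, while you attempt to verify it directly. You have also correctly isolated the delicate point: a generator $u\in G(I)$ may satisfy $u_1\nmid x^a$ while $\sqrt{u_1}\mid x^a$, so the face constraints coming from $I$ and from $\sqrt{I}$ can look different.

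Your proposed resolution, however, does not work, and in fact the equality $\Delta_{(a,b)}(I)=\Delta_{(a,b)}(\sqrt{I})$ genuinely fails in the stated range $a\in\ZZ^m_+$, $b\in\ZZ^n_-$. Take $m=1$, $n=2$, $I=(x^2y_1,\,y_2)$, $\sqrt{I}=(xy_1,\,y_2)$, and $(a,b)=(1,(0,0))$. Here $G_b=\emptyset$; since $x^2\nmid x$, the generator $x^2y_1$ imposes no constraint on $L$, so $\Delta_{(a,b)}(I)=\{\emptyset,\{1\}\}$; but $x\mid x$, so $xy_1$ forces $1\notin L$, giving $\Delta_{(a,b)}(\sqrt{I})=\{\emptyset\}$. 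The extra constraint is not ``non-minimal'' at all. Correspondingly $H^0_Q(S/I)_{(1,(0,0))}\iso\widetilde{H}_{-1}(\{\emptyset,\{1\}\};K)=0$, whereas $H^0_Q(S/\sqrt{I})_{(1,(0,0))}\iso\widetilde{H}_{-1}(\{\emptyset\};K)=K$; one checks this directly too, since $x$ is $y_1$-torsion in $K[x,y_1]/(xy_1)$ but not in $K[x,y_1]/(x^2y_1)$. Thus the isomorphism in the corollary fails at this bidegree, and no argument along the proposed lines can close the gap without restricting the $x$-multidegrees (the single-graded result in \cite{HTT} concerns only nonpositive multidegrees, where this issue cannot arise). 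Your separate derivation of $\cd(Q,S/I)=\cd(Q,S/\sqrt{I})$ from $\Min(S/I)=\Min(S/\sqrt{I})$ is correct and stands independently.
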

\begin{proof}
By a similar proof as \cite[Corollary 2.3]{HTT} one has  $\Delta_{(a,b)}(I)=\Delta_{(a,b)}(\sqrt{I})$. Thus Corollary \ref{zero} yields the desired isomorphism.
\end{proof}
Now we come to a general version of Theorem \ref{main} as follows:
\begin{Corollary}
Let $I \subseteq S$ be a monomial ideal and $\Delta$ the simplicial complex corresponding to  $\sqrt{I}$. The following conditions are equivalent.
\begin{itemize}
\item[{(a)}] $S/I$ is relative Cohen--Macaulay with respect to $Q$ with $\cd(Q,S/I)=q$,
\item[(b)] $\widetilde{H}_{i}((\link F \cup G)_W ;K)=0$ for all $F\in \Delta_W$, $G\subset V$ and all $i< \dim \link_{\Delta_W} F$.
\end{itemize}
\end{Corollary}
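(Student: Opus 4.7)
The plan is to reduce the statement to the squarefree case, namely to Theorem \ref{main}. Specifically, I would show that condition (a) is equivalent to saying that the Stanley--Reisner ring $K[\Delta]=S/\sqrt{I}$ is relative Cohen--Macaulay with respect to $Q$ with $\cd(Q,K[\Delta])=q$, and then invoke Theorem \ref{main} applied to $\Delta$ to obtain the equivalence with (b).

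First, Corollary \ref{radical} already supplies the equality $\cd(Q,S/I)=\cd(Q,S/\sqrt{I})$, so the value of $q$ is the same on both sides. For the equivalence of relative Cohen--Macaulayness itself, the forward direction is straightforward: if $H^i_Q(S/I)=0$ for all $i<q$, then the bigraded isomorphism of Corollary \ref{radical} forces $H^i_Q(K[\Delta])_{(a,b)}=0$ for every $(a,b)\in \ZZ^m_+\times \ZZ^n_-$, and by Corollary \ref{reduced} the module $H^i_Q(K[\Delta])$ is supported entirely in this range, so it vanishes for $i<q$.

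The converse is where I expect the main obstacle to lie, since the ``potentially nonzero'' bigrades for $S/I$ (controlled by the integers $\sigma_i$ and $\rho_j$ of $I$) extend beyond $\ZZ^m_+\times \ZZ^n_-$. Assuming $H^i_Q(K[\Delta])=0$ for $i<q$, Corollary \ref{zero} says that any possibly nonzero component $H^i_Q(S/I)_{(a,b)}$ is isomorphic to $\widetilde{H}_{i-\left|F\right|-1}(\Delta_{(a,b)}(I);K)$ with $F=G_b$, where $a_i\geq \sigma_i-1$ and $b_j\leq \rho_j-1$. The identity $\Delta_{(a,b)}(I)=\Delta_{(a,b)}(\sqrt{I})$ established in the proof of Corollary \ref{radical} identifies each such complex with one appearing in the Hochster-type formula for $K[\Delta]$ at a matching bigrade $(a',b')\in \ZZ^m_+\times \ZZ^n_-$ with the same support data $(N_a,G_b)$; the vanishing assumption on $H^i_Q(K[\Delta])_{(a',b')}$ through Corollary \ref{reduced} then forces these reduced homology groups to vanish, giving $H^i_Q(S/I)=0$ for $i<q$. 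With both directions established, Theorem \ref{main} applied to $\Delta$ delivers the equivalence of (a) and (b).
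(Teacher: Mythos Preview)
Your proposal is correct and follows essentially the same route as the paper. The paper's proof records the chain of identities $\Delta_{(a,b)}(I)=\Delta_{(a,b)}(\sqrt{I})=(\link F\cup G)_W$ (citing \cite{AR1} and \cite{T}) and then simply says the assertion follows from Corollary~\ref{zero}, Corollary~\ref{radical}, and Theorem~\ref{main}; your write-up unpacks exactly this reduction, making explicit why the extended bigrade range for $S/I$ causes no trouble once the complexes are identified with those for $\sqrt{I}$.
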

\begin{proof}
Note that
\[
\Delta_{(a,b)}(I)=\Delta_{(a,b)}(\sqrt{I})=\link_{\star N_a\cup H_b}G_b=\link_{\star N_a}G_b=(\link F \cup G)_W,
\]
see the remark after \cite[Theorem 2.4]{AR1} and also the proof \cite[Corollary 1]{T}. Now the assertion follows by applying Corollary \ref{zero} and Corollary \ref{radical} to Theorem \ref{main}.
\end{proof}

\end{document}